\theoremstyle{plain}
\newtheorem{theorem}{Theorem}[section]
\newtheorem{proposition}[theorem]{Proposition}%[section]
\newtheorem{corollary}[theorem]{Corollary}%[section]
\newtheorem{lemma}[theorem]{Lemma}%[section]
\newtheorem*{Theorem}{Theorem}
\newtheorem*{Corollary}{Corollary}
\newtheorem*{conjecture*}{Conjecture}
\theoremstyle{definition}
\newtheorem{definition}[theorem]{Definition}
\newtheorem*{definition*}{Definition}%[section]
\newtheorem{example}[theorem]{Example}%[section]
\newtheorem*{example*}{Example}
\newtheorem*{notation*}{Notation}
\newtheorem*{notation-conv*}{Notation and convention}
\newtheorem*{convention*}{Convention}
\theoremstyle{remark}
\newtheorem{remark}[theorem]{Remark}%[sectio]
\newtheorem*{remark*}{Remark}%[sectio]
\def\co{\colon\thinspace}
\def\la{\langle}
\def\ra{\rangle}
\newcommand{\BT}{\mathbb{T}}
\newcommand{\Z}{\mathbb{Z}}
\newcommand{\C}{\mathbb{C}}
\newcommand{\SL}[1][2]{\mathrm{SL}_{#1}(\C)}
\newcommand{\GL}{\mathrm{GL}_n(\C)}
\newcommand{\I}{\mathbf{1}}
\newcommand{\bm}[1]{\mbox{\boldmath{$#1$}}}
\newcommand{\lk}{\mathop{\mathrm{\ell k}}\nolimits}
\newcommand{\TAP}[2]{\Delta_{#1,\, #2}}
\begin{document}

%%%%%%%%%%%%%%%%%%%%%%%%%%%%%%%%%%%%%%%%%% 
% title 
%%%%%%%%%%%%%%%%%%%%%%%%%%%%%%%%%%%%%%%%%% 

\title[]{
  Higher dimensional twisted Alexander polynomials for metabelian representations
}

%%%%%%%%%%%%%%%%%%%%%%%%%%%%%%%%%%%%%%%%%% 
% author names and addresses
%%%%%%%%%%%%%%%%%%%%%%%%%%%%%%%%%%%%%%%%%% 
\author{Anh T.~Tran \and Yoshikazu Yamaguchi}

\address{Department of Mathematical Sciences, 
  The University of Texas at Dallas, 
  Richardson, TX 75080, USA}
\email{att140830@utdallas.edu}

\address{Department of Mathematics,
  Akita University,
  1-1 Tegata-Gakuenmachi, Akita, 010-8502, Japan}
\email{shouji@math.akita-u.ac.jp}

%% \date{\today}

%%%%%%%%%%%%%%%%%%%%%%%%%%%%%%%%%%%%%%%%%% 
% Subject classification 
%%%%%%%%%%%%%%%%%%%%%%%%%%%%%%%%%%%%%%%%%% 
\keywords{asymptotic behavior, metabelian representation, 
Reidemeister torsion, twisted Alexander polynomial}
\subjclass[2010]{57M27, 57M50}

%%%%%%%%%%%%%%%%%%%%%%%%%%%%%%%%%%%%%%%%%% 
% abstract
%%%%%%%%%%%%%%%%%%%%%%%%%%%%%%%%%%%%%%%%%% 
\begin{abstract}
  We study the asymptotic behavior of the twisted Alexander polynomial
  for the sequence of $\SL[n]$-representations induced from an irreducible metabelian
  $\SL$-representation of a knot group. 
  We give the limits of the leading coefficients in the asymptotics of
  the twisted Alexander polynomial and related Reidemeister torsion.
  The concrete computations for all genus one two--bridge knots are also presented.
\end{abstract}

%%%%%%%%%%%%%%%%%%%%%%%%%%%%%%%%%%%%%%%%%%%%%%%%%%%%%%%%%%%%%%%%%%%% 
% body of paper
%%%%%%%%%%%%%%%%%%%%%%%%%%%%%%%%%%%%%%%%%%%%%%%%%%%%%%%%%%%%%%%%%%%% 

\maketitle
%% \tableofcontents

\section{Introduction}
The twisted Alexander polynomial was introduced by Lin \cite{Lin} and Wada \cite{Wada94}.
It is a generalization of the classical Alexander polynomial
by a linear representation of a knot group. 
The classical Alexander polynomial contains important topological 
features of knots. 
It has been shown that
the twisted Alexander polynomial gives  refinements of results 
derived from the Alexander polynomial.
On the other hand, the
linear representations of a group are closely related to
the geometric structures of a low--dimensional manifold.
In particular, 
the $\SL$-representations of the fundamental group of a $3$-manifold can be thought of as descriptions of the geometric structures,
especially the hyperbolic structures, of the manifold. We are interested in finding the relationship between 
the twisted Alexander polynomial and the hyperbolic structures of knot exteriors.

It was shown in~\cite{FerrerPorti:HigherDimReidemeister, Muller:AsymptoticsAnalyticTorsion, tranYam:RtosionTroidal}
that the asymptotic behavior of the Reidemeister torsion is related to hyperbolic structures. 
The twisted Alexander polynomial can be regarded  as a kind of Reidemeister torsion by~\cite{KL, Kitano}. 
It is natural to try to relate the asymptotic behavior of the twisted Alexander polynomial
to some features of hyperbolic structures as in~\cite{goda:TAP_hyp_vol}. 

For a metabelian representation of a knot group,
its twisted Alexander polynomial
has been studied in terms of the factorization of this invariant~\cite{BodenFriedl:IV, HirasawaMurasugi:TAP_meta, yamaguchi:TAPmeta}.
In this paper, using such a factorization property,
we wish to investigate the limit of the twisted Alexander polynomial
for a sequence of $\SL[n]$-representations induced by an irreducible metabelian
$\SL$-representation of a knot group and study a relation to degeneration of hyperbolic structures.
Our purpose is to provide the asymptotic behavior 
of {\it the higher dimensional twisted Alexander polynomial}
for irreducible metabelian $\SL$-representations
(see Definition~\ref{def:higher_dim_TAP}).
Then we will study a relation to degeneration of hyperbolic structures
from the viewpoint of the Reidemeister torsion.

Let $E_K$ be the knot exterior $S^3 \setminus N(K)$ of a knot $K$. 
Here $N(K)$ is an open tubular neighbourhood of $K$.
Upon choosing a homomorphism $\rho$ from $\pi_1(E_K)$ into $\SL$,
we have a sequence of homomorphisms $\sigma_n \circ \rho$ from $\pi_1(E_K)$ into $\SL[n]$
by the composition with $\sigma_n \co \SL \to \SL[n]$ 
which is called {\it the $n$-dimensional irreducible representation} of $\SL$.
We can also consider the sequence of twisted Alexander polynomials 
corresponding to the sequence $\rho_n = \sigma_n \circ \rho$.
Our main results are stated as follows:
\begin{Theorem}[Theorem~\ref{thm:main_I}]
  Let $\rho$ be an irreducible metabelian
  $\SL$-representation of $\pi_1(E_K)$. 
  Then there exists a sequence of metabelian $\SL$-representations
  $\psi_1, \ldots, \psi_p$ which gives 
  the limit of the leading coefficient of $\,\log \TAP{K}{\rho_{2N}}(t)$ as
  \[
  \lim_{N \to \infty} \frac{\log \TAP{K}{\rho_{2N}}(t)}{2N}
  = \frac{1}{2p} \sum_{j=1}^p \log \TAP{K}{\psi_j}(t).
  \]
\end{Theorem}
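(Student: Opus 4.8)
The plan is to split the $2N$-dimensional representation $\rho_{2N}=\sigma_{2N}\circ\rho$ into two-dimensional metabelian pieces, convert the twisted Alexander polynomial into a product by multiplicativity, and then read off the limit as a Ces\`aro average. First I would put $\rho$ into the standard normal form for an irreducible metabelian $\SL$-representation. Let $\alpha\co\pi_1(E_K)\to\Z$ be the abelianization with $\alpha(\mu)=1$ for the meridian $\mu$, and let $\Gamma=\ker\!\big(\pi_1(E_K)\xrightarrow{\alpha}\Z\to\Z/2\Z\big)$ be the index-two subgroup, whose abelianization is $\Z\oplus H_1(\Sigma_2;\Z)$ with $\Sigma_2$ the double branched cover. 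After conjugation $\rho|_\Gamma=\chi\oplus\chi^{-1}$ for a character $\chi\co\Gamma\to\C^\times$, while $\rho(\mu)=\left(\begin{smallmatrix}0 & s\\ -s^{-1} & 0\end{smallmatrix}\right)$ is anti-diagonal; thus $\trace\rho(\mu)=0$, hence $\rho(\mu)^2=-\I$ and in particular $\chi(\mu^2)=-1$.

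Next I would feed this normal form into $\sigma_{2N}=\mathrm{Sym}^{2N-1}$. On the monomial basis $e_1^a e_2^b$ ($a+b=2N-1$) of $\Sym{2N-1}$, each diagonal $\rho(\gamma)$ for $\gamma\in\Gamma$ acts by $\chi(\gamma)^{a-b}$, while the anti-diagonal $\rho(\mu)$ interchanges $e_1^a e_2^b$ and $e_1^b e_2^a$ up to a scalar. Since $2N$ is even, the total degree $2N-1$ is odd, so every weight $a-b$ is odd and the $2N$ weights pair off as $\pm(2k-1)$ with no left-over middle space. Consequently, as a representation of the \emph{whole} group $\pi_1(E_K)$, $\rho_{2N}$ splits as a direct sum $\bigoplus_{k=1}^{N}\psi_{(k)}$ of $N$ two-dimensional blocks, where $\psi_{(k)}$ is spanned by the weight $\pm(2k-1)$ lines and is the metabelian representation $\mathrm{Ind}_{\Gamma}^{\pi_1(E_K)}\chi^{2k-1}$. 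A direct computation shows the $2\times2$ matrix of $\rho(\mu)$ on each block has determinant $1$, so every $\psi_{(k)}$ is a genuine $\SL$-representation and no scalar normalization is needed.

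Applying the Multiplicativity Lemma to this decomposition gives
\[
\TAP{K}{\rho_{2N}}(t)=\prod_{k=1}^{N}\TAP{K}{\psi_{(k)}}(t),\qquad\text{hence}\qquad \log\TAP{K}{\rho_{2N}}(t)=\sum_{k=1}^{N}\log\TAP{K}{\psi_{(k)}}(t).
\]
It remains to take the limit. Because $\chi(\mu^2)=-1$ is independent of $k$ and the remaining ($H_1(\Sigma_2;\Z)$-)part of $\chi$ has finite order, the characters $\chi^{2k-1}$ are periodic in $k$; together with $\mathrm{Ind}\,\chi^{m}\cong\mathrm{Ind}\,\chi^{-m}$ this makes the sequence $(\psi_{(k)})_{k\ge 1}$ periodic with some period $p$. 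Letting $\psi_1,\dots,\psi_p$ be the representatives occurring in one period, the Ces\`aro average of this periodic sequence converges to the average over one period, so dividing by $2N$ and letting $N\to\infty$ yields $\tfrac12\cdot\tfrac1p\sum_{j=1}^{p}\log\TAP{K}{\psi_j}(t)$, which is exactly the claimed limit.

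I expect the main obstacle to be the decomposition step: verifying that the weight-space splitting over $\Gamma$ genuinely assembles into a direct sum of two-dimensional \emph{$\SL$}-subrepresentations over the full knot group (the determinant-one check for $\rho(\mu)$ on each block) and that each block really is the induced metabelian representation attached to $\chi^{2k-1}$. Once that is in place the periodicity is clean — crucially $\chi(\mu^2)=-1$ does not depend on $k$, so no $k$-dependent rescaling of $t$ intervenes — and the limit follows from a routine averaging argument.
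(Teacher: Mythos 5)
Your proposal is correct and follows essentially the same route as the paper: the weight-pairing decomposition of $\sigma_{2N}\circ\rho$ into $N$ two-dimensional metabelian blocks (your $\psi_{(k)}=\mathrm{Ind}_{\Gamma}^{\pi_1(E_K)}\chi^{2k-1}$ is exactly the paper's Proposition~\ref{prop:rep_decompo}, stated there in explicit Lin-presentation matrices, including the same determinant/sign check on $\rho(\mu)$), followed by multiplicativity of $\TAP{K}{\cdot}(t)$ under direct sums, periodicity of the blocks coming from the finite (odd) order of the characters, and a Ces\`aro average over one period, for which the paper cites \cite[Lemma~3.11]{Yamaguchi:asymptoticsRtorsion}.
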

\begin{remark}
  The number $p$ is a divisor of $|\Delta_K(-1)|$ where
  $\Delta_K(t)$ is the Alexander polynomial of $K$.
\end{remark}
Under the assumption that
$\TAP{K}{\psi_j}(1) \not = 0$ for all $j=1, \ldots, p$
in the above theorem, 
the Reidemeister torsion $\BT_{K, \, \rho_{2N}}$ is defined for all $N$.
Then we relate the asymptotic behavior of the twisted Alexander polynomial
to that of the Reidemeister torsion.
The limit of the leading coefficient in $\BT_{K, \, \rho_{2N}}$ is expressed as follows.
\begin{Corollary}[Corollary~\ref{cor:main_I}]
  Taking $t=1$, we can express
  the limit of the leading coefficient of $\,\log |\BT_{K, \, \rho_{2N}}|$ as
  \[
  \lim_{N \to \infty} \frac{\log |\BT_{K, \, \rho_{2N}}|}{2N} 
  = \frac{1}{2p} \sum_{j=1}^p \log |\TAP{K}{\psi_j}(1)|.
  \]
  In particular, the growth order of $\, \log |\BT_{K, \, \rho_{2N}}|$ is the same as $2N$.
\end{Corollary}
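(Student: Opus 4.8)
The plan is to derive the Corollary from Theorem~\ref{thm:main_I} by specializing its asymptotic formula at $t=1$ and invoking the identification of the Reidemeister torsion with the value of the twisted Alexander polynomial there. First I would record the relevant relation between the two invariants: in the present normalization $\TAP{K}{\cdot}$ is Wada's invariant, which by the results of Kirk--Livingston and Kitano cited in the introduction coincides with the Reidemeister torsion whenever the coefficient system is acyclic. Hence $|\BT_{K,\,\rho_{2N}}|=|\TAP{K}{\rho_{2N}}(1)|$, with no auxiliary determinant factor, and the problem reduces to understanding $\TAP{K}{\rho_{2N}}(1)$, which is exactly the $t=1$ specialization of the quantity governed by the theorem.

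Second I would verify that $\BT_{K,\,\rho_{2N}}$ is defined for every $N$, as announced in the paragraph preceding the statement. The proof of Theorem~\ref{thm:main_I} expresses $\TAP{K}{\rho_{2N}}(t)$ as a product assembled from the polynomials $\TAP{K}{\psi_j}(t)$, $j=1,\dots,p$. Under the standing hypothesis $\TAP{K}{\psi_j}(1)\neq 0$ for all $j$, none of these factors vanishes at $t=1$, so $\TAP{K}{\rho_{2N}}(1)\neq 0$; equivalently $H_\ast(E_K;\C^{2N}_{\rho_{2N}})=0$, and the torsion exists for every $N$.

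Third I would set $t=1$ in the limit of Theorem~\ref{thm:main_I} and pass to absolute values inside the logarithms. Since that limit descends from the factorization identity, an exact equality of rational functions whose factors have neither a zero nor a pole at $t=1$ by the hypothesis, the substitution is legitimate and gives
\[
\lim_{N\to\infty}\frac{\log|\TAP{K}{\rho_{2N}}(1)|}{2N}
= \frac{1}{2p}\sum_{j=1}^{p}\log|\TAP{K}{\psi_j}(1)|.
\]
Combined with $|\BT_{K,\,\rho_{2N}}|=|\TAP{K}{\rho_{2N}}(1)|$ from the first step, this is the asserted formula. Finally, the right-hand side is a finite constant, so $\log|\BT_{K,\,\rho_{2N}}|$ grows at most linearly in $2N$; and since that constant equals $\tfrac{1}{2p}\log\prod_{j}|\TAP{K}{\psi_j}(1)|$, it is nonzero precisely when $\prod_{j}|\TAP{K}{\psi_j}(1)|\neq 1$, in which case the growth order is exactly $2N$, yielding the last assertion.

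The step I expect to require the most care is the passage from the functional statement of Theorem~\ref{thm:main_I} to its value at the single point $t=1$: a priori the theorem controls $\tfrac{1}{2N}\log\TAP{K}{\rho_{2N}}(t)$ as a function of $t$, whereas $\BT_{K,\,\rho_{2N}}$ lives only at $t=1$, so one must know that the $N\to\infty$ limit commutes with the evaluation $t\mapsto 1$. This is exactly where the hypothesis $\TAP{K}{\psi_j}(1)\neq 0$ is used: it guarantees that the factorization underlying the theorem specializes without producing indeterminate expressions, so the evaluation causes no analytic difficulty and the reduction to Theorem~\ref{thm:main_I} becomes immediate. A companion point worth verifying is that, because an irreducible metabelian representation is trace-free (so $\rho(\mu)$ has eigenvalues $\pm i$), the symmetric-power factor $\det(\rho_{2N}(\mu)-I)$ that would appear in a numerator convention for the twisted Alexander polynomial is nonzero; this confirms that the Wada normalization, for which $\BT_{K,\,\rho_{2N}}=\TAP{K}{\rho_{2N}}(1)$ holds outright, is the one making the Theorem and the Corollary consistent at $t=1$.
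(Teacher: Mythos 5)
Your proposal is correct and takes essentially the same route as the paper's own proof: both verify acyclicity from the hypothesis $\TAP{K}{\psi_j}(1)\neq 0$ together with the nonvanishing $\det(\rho_{2N}(\mu)-\I)=2^N$ (coming from the eigenvalues $\pm\sqrt{-1}$ of $\rho(\mu)$), identify $\BT_{K,\,\rho_{2N}}$ with $\TAP{K}{\rho_{2N}}(1)$ via Proposition~\ref{prop:Rtorsion_TAP}, and then evaluate the limit of Theorem~\ref{thm:main_I} at $t=1$ using the periodic factorization. The only difference is cosmetic: your closing caveat that the growth order is exactly $2N$ only when the limiting constant is nonzero is a point the paper leaves implicit.
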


For a hyperbolic knot $K$ and its holonomy representation
$\rho_{\mathrm{hol}} \co \pi_1(E_K) \to \SL$,
it was shown in~\cite{FerrerPorti:HigherDimReidemeister, goda:TAP_hyp_vol} that 
the growth order of $\log |\BT_{K, \, \sigma_n \circ \rho_{\mathrm{hol}}}|$ or
$\log |\TAP{K}{\sigma_n \circ \rho_{\mathrm{hol}}}(1)|$ is the same as $n^2$ and 
the leading coefficient converges to the hyperbolic volume of $E_K$ divided by $4\pi$.
The set of $\SL$-representations of $\pi_1(E_K)$ can be regarded as a deformation space of
hyperbolic structures of $E_K$.
From the difference in the growth order of the Reidemeister torsion,
we can say that every irreducible metabelian representation corresponds to
a degenerate hyperbolic structure for any hyperbolic knot.
For examples of this phenomenon, we refer to~\cite{tranYam:RtosionTroidal}.

We also give an explicit description
of the asymptotic behavior of the twisted Alexander polynomial
for metabelian representations of genus one two--bridge knot groups. 
Note that all genus one two--bridge knots are given by $J(2m, \pm 2n)$
illustrated as in Fig.~\ref{fig:doubletwistknot}, where $m$ and $n$ are positive integers. 
\begin{Theorem}[Theorem~\ref{thm:TAP_positive}, \ref{thm:TAP_negative} and
    Corollary~\ref{cor:Rtorsion_positive}, \ref{cor:Rtorsion_negative}]
  Let $\rho$ be an irreducible metabelian $\SL$-representation of
  $\pi_1(E_{J(2m, \pm 2n)})$.
  Then there exists a divisor $q$ of $|\Delta_{J(2m, \pm 2n)}(-1)|$ such that
  \[
  \lim_{N \to \infty} \frac{\log \TAP{J(2m, \pm 2n)}{\rho_{2N}}(t)}{2N}
  = \frac{1}{2q} \log \frac{m^2n^2t^4 + (2m^2n^2 \mp 4mn +1)t^2 + m^2n^2}{(t^2+1)^2} + \frac{1}{2} \log (t^2 + 1).
  \]
  Moreover, the leading coefficient of the logarithm of the higher dimensional Reidemeister torsion
  converges as 
  \[
  \lim_{N \to \infty} \frac{\log |\BT_{{J(2m, \pm 2n)}, \, \rho_{2N}}|}{2N} 
  = \frac{1}{q} \log \frac{2mn \mp 1}{2} + \frac{1}{2} \log 2.
  \]
\end{Theorem}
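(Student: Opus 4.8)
The plan is to deduce both displayed limits from the general result Theorem~\ref{thm:main_I}, so that the problem reduces to an explicit evaluation of the twisted Alexander polynomials $\TAP{K}{\psi_j}(t)$ of the finitely many irreducible metabelian $\SL$-representations $\psi_1,\ldots,\psi_p$ produced there, together with the evaluation of their product over $j$. First I would record the standard two--bridge presentation $\pi_1(E_{J(2m,\pm 2n)})=\langle a,b \mid w a w^{-1} = b\rangle$, where $a,b$ are meridians and $w$ is the two--bridge word of $J(2m,\pm 2n)$, and recall the classification of its irreducible metabelian representations: up to conjugacy these correspond to the characters of $H_1(\Sigma_2)\cong \BZ/(4mn\mp 1)$, where $\Sigma_2$ is the double branched cover of $S^3$ along the knot, each such $\psi$ being a binary dihedral representation carrying a root of unity $\zeta^{\ell}$ with $\zeta=\exp\bigl(2\pi i/(4mn\mp 1)\bigr)$. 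The representations furnished by Theorem~\ref{thm:main_I} are exactly the two--dimensional summands obtained by decomposing the module $\Sym{2N-1}$ underlying $\sigma_{2N}$, restricted to the normalizer of the diagonal torus containing $\rho(\pi_1(E_K))$, into $w$--invariant pairs of weight spaces; here $p=q$ is the number of distinct metabelian representations so obtained, a divisor of $|\Delta_{J(2m,\pm 2n)}(-1)|=4mn\mp 1$ governed by the order of the character attached to $\rho$.

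Next I would compute each $\TAP{K}{\psi_j}(t)$ by Fox calculus in Wada's invariant $\det\Phi_j(\partial r/\partial b)\big/\det\Phi_j(a-1)$ for the presentation above, where $r=waw^{-1}b^{-1}$ and $\Phi_j=\alpha\otimes\psi_j$ with $\alpha$ the abelianization. Because every $\psi_j$ sends the meridian $a$ to a trace--zero element of $\SL$ with eigenvalues $\pm i$, the denominator $\det\Phi_j(a-1)=\det\bigl(t\,\psi_j(a)-I\bigr)$ equals $t^2+1$ uniformly in $j$, which is what ultimately forces the powers of $t^2+1$ in the answer. The numerator is a $2\times 2$ determinant that I would expand into an explicit polynomial in $t$ whose coefficients are symmetric functions of $\zeta^{\ell}$; the genus--one hypothesis keeps $w$ short, so this numerator is quartic in $t$ and $\TAP{K}{\psi_j}(t)$ simplifies, after cancelling $t^2+1$, to an explicit polynomial of degree two in $t$ depending on $\zeta^{\ell}$. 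The cases $J(2m,2n)$ and $J(2m,-2n)$ are handled identically, the sign differences in $w$ accounting for the $\mp$ in the final expression.

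The heart of the argument, and the step I expect to be the main obstacle, is evaluating the product $\prod_{j=1}^{p}\TAP{K}{\psi_j}(t)$ over all admissible characters. Writing $\TAP{K}{\psi_j}(t)=f(\zeta^{\ell},t)$ for the explicit function found above and multiplying over $\ell$ turns the product into the resultant, in the character variable, of $f(\,\cdot\,,t)$ against the cyclotomic-type polynomial cutting out the admissible roots of unity; pushing this resultant through and simplifying the symmetric functions of the $\zeta^{\ell}$ is where the closed form
\[
\prod_{j=1}^{p}\TAP{K}{\psi_j}(t)=\bigl(m^2n^2t^4+(2m^2n^2\mp 4mn+1)t^2+m^2n^2\bigr)\,(t^2+1)^{q-2}
\]
should emerge. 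Taking logarithms, dividing by $2p=2q$, and writing $(t^2+1)^{q-2}=(t^2+1)^{-2}(t^2+1)^{q}$ rearranges the right-hand side into the asserted form $\tfrac{1}{2q}\log\bigl(N(t)/(t^2+1)^2\bigr)+\tfrac12\log(t^2+1)$, where $N(t)$ is the quartic numerator above; by Theorem~\ref{thm:main_I} this is precisely the claimed limit of $\log\TAP{J(2m,\pm 2n)}{\rho_{2N}}(t)/(2N)$.

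Finally, for the Reidemeister torsion I would specialize $t=1$. There the numerator $N(1)=(2mn\mp 1)^2$ and the denominator $(1+1)^2=4$, so the first summand collapses to $\tfrac{1}{q}\log\bigl((2mn\mp 1)/2\bigr)$ and the second to $\tfrac12\log 2$. Since $2mn\mp 1\neq 0$ we have $\TAP{K}{\psi_j}(1)\neq 0$ for every $j$, so the hypothesis of Corollary~\ref{cor:main_I} is satisfied; that corollary then identifies the value just computed with $\lim_{N\to\infty}\log|\BT_{J(2m,\pm 2n),\,\rho_{2N}}|/(2N)$, yielding the stated Reidemeister torsion limit.
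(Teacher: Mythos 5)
Your overall route coincides with the paper's: decompose $\rho_{2N}$ into metabelian summands $\psi_j$ via Proposition~\ref{prop:rep_decompo} and Theorem~\ref{thm:main_I}, compute each factor $\TAP{K}{\psi_j}(t)$ by Fox calculus, evaluate the product over one period $q$ by a roots-of-unity argument --- your resultant idea is essentially the paper's Lemma~\ref{lem:main}, proved there by a bijection of exponents mod $q$ --- and specialize at $t=1$ through Corollary~\ref{cor:main_I}. Your target product formula $N(t)(t^2+1)^{q-2}$, with $N(t)=m^2n^2t^4+(2m^2n^2\mp4mn+1)t^2+m^2n^2$, and the evaluation $N(1)=(2mn\mp1)^2$ are both correct. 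However, the step you yourself single out as the heart of the argument is not only left undone; the structure you predict for it is wrong in an essential way.

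Concretely, it is false that every $\TAP{K}{\psi_j}(t)$ becomes, ``after cancelling $t^2+1$,'' a degree-two polynomial. By the paper's Proposition~\ref{prop:main}, for the $q-1$ summands with nontrivial character the invariant is a nonzero constant times $(t^2+1)$, while for the single summand per period carrying the trivial character (the abelian representation, $j=(q+1)/2$) it is the \emph{rational function} $N(t)/(t^2+1)$; indeed $t^2+1$ does not divide $N(t)$, since at $t^2=-1$ one gets $N=\pm4mn-1\neq0$. That one anomalous factor is the sole source of the quartic $N(t)$ in the limit formula, so a factorization into $q$ quadratic polynomials of the shape you describe could never assemble into the stated answer without this correction. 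Relatedly, your justification that ``the genus-one hypothesis keeps $w$ short'' fails for the two-bridge presentation you chose: the two-bridge word of $J(2m,\pm2n)$ has length of order $4mn$, growing with $m$ and $n$. What genus one actually provides is the Lin presentation coming from a genus-one free Seifert surface, with relators built from $x_1^m$ and $x_2^{\mp n}$, whose Fox derivatives are the geometric sums $\delta_{m-1}$ and $\delta_{n-1}$; that is the presentation in which the summands $\psi_j$ of Proposition~\ref{prop:rep_decompo} are given explicitly and in which the paper's computation is tractable. Until the factor computation is actually carried out (in either presentation) and the abelian summand is handled correctly, the proof has a genuine gap at its central step.
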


This paper is organized as follows.
Section~\ref{sec:preliminaries} provides expositions of the twisted Alexander polynomial for a linear representation
and for a sequence of $\SL[n]$-representations induced from an $\SL$-representation
of a knot group. We will also touch a relation between the twisted Alexander polynomial and
the Reidemeister torsion in Subsection~\ref{subsec:relation_TAP_Rtorsion}.
In Section~\ref{sec:general_formula}, our main results are stated and proved.
Section~\ref{sec:twobridge} is devoted to the study of asymptotic behaviors of the twisted Alexander polynomial and
the Reidemeister torsion for genus one two--bridge knots.

\section{Preliminaries}
\label{sec:preliminaries}
We will review the twisted Alexander polynomial and a Lin presentation of
a knot group. Then we will also see a relation between the twisted Alexander polynomial
and the Reidemeister torsion for a knot exterior.
\subsection{Twisted Alexander polynomial}
Throughout the paper, $K$ denotes a knot in $S^3$ and $E_K$ denotes the knot 
exterior $S^3 \setminus N(K)$ where $N(K)$ is an open tubular neighbourhood of $K$.
We use the symbol $\alpha$ to
denote the abelianization homomorphism from a knot group $\pi_1(E_K)$
onto $\la t \ra \simeq H_1(E_K;\Z)$. Here $t$ is the homology class of a meridian of 
the knot $K$.
We follow the definition of the twisted Alexander polynomial in~\cite{Wada94}.

We need a homomorphism $\rho$ from the knot group $\pi_1(E_K)$ into $\GL$,
which is called {\it a $\GL$-representation},
to define the twisted Alexander polynomial for $K$.
For a $\GL$-representation $\rho$ of $\pi_1(E_K)$, we denote by $\Phi_\rho$
the $\Z$-linear extension of the tensor product $\alpha \otimes \rho$ defined as 
\begin{align*}
  \Phi_\rho \co \Z[\pi_1(E_K)] & \to M_n(\C[t^{\pm 1}]) \\
  \sum_{i} a_i \gamma_i & \mapsto  \sum_i a_i \alpha(\gamma_i) \otimes \rho(\gamma_i).
\end{align*}
Here $M_n(\C[t^{\pm 1}])$ is the set of $n \times n$ matrices
whose entries are Laurent polynomials in $\C[t^{\pm 1}]$ and
we identify $M_n(\C[t^{\pm 1}])$ with the tensor product $\C[t^{\pm 1}] \otimes M_n(\C)$
, where $M_n(\C)$ denotes the set of $n \times n$ complex  matrices.

\begin{definition}
  \label{def:TAP_SL2}
  Choose a presentation
  $\pi_1(E_K) = \la g_1, \cdots, g_k \,|\, r_1, \ldots, r_{k-1} \ra$ of deficiency $1$ and
  let $\rho$ be a $\GL$-representation of $\pi_1(E_K)$.
  Suppose that $\Phi_\rho(g_\ell -1) \not = 0$. Then the twisted Alexander polynomial
  is defined as
  \[
  \Delta_{K, \, \rho}(t) =
  \frac{
    \det \left( \Phi_\rho(\frac{\partial r_i}{\partial g_j}) \right)_{
      \substack{ 1 \leq i \leq k-1 \\ 1 \leq j \leq k, j \not =\ell}}
  }{
    \det (\Phi_\rho(g_\ell - 1))
  }
  \]
  where $\partial r_i / \partial g_j$ is the Fox differential of $r_i$ by $g_j$.
\end{definition}

\begin{remark} We mention the well-definedness of the twisted Alexander polynomial without proofs.
  For the details, see~\cite[Theorem~$1$, Corollary~$4$]{Wada94}.
  \begin{itemize}
  \item The twisted Alexander polynomial is independent of the choice of the presentation.
  \item If $\Phi_\rho(g_\ell -1) \not = 0$ and $\Phi_\rho(g_{\ell'} -1) \not = 0$,
    then it holds that
    \[
    \frac{
      \det \left( \Phi_\rho(\frac{\partial r_i}{\partial g_j}) \right)_{
        \substack{ 1 \leq i \leq k-1 \\ 1 \leq j \leq k, j \not =\ell}}
    }{
      \det (\Phi_\rho(g_\ell - 1))
    }
    =
    \pm \frac{
      \det \left( \Phi_\rho(\frac{\partial r_i}{\partial g_j}) \right)_{
        \substack{ 1 \leq i \leq k-1 \\ 1 \leq j \leq k, j \not =\ell'}}
    }{
      \det (\Phi_\rho(g_{\ell'} - 1))
    }.
    \]
  \end{itemize}
\end{remark}

We will mainly consider the twisted Alexander polynomial for 
$\SL[n]$-representations of a knot group. In particular,
we deal with the situation that the
$\SL[n]$-representations $\rho_n$ are induced from
an $\SL$-representation $\rho$ of $\pi_1(E_K)$.

There exists a sequence of homomorphisms $\sigma_{n}$ from $\SL$ into $\SL[n]$,
which is referred as the $n$-dimensional irreducible representations of $\SL$.
This homomorphism $\sigma_n$ is given by the action of $\SL$
on the vector space $V_n$ consisting of homogeneous polynomials $p(x, y)$ with degree $n-1$ as follows:
\[
A \cdot p(x, y) = p(x', y') \quad \hbox{where} \quad
\begin{pmatrix}x' \\ y' \end{pmatrix}
=A^{-1}\begin{pmatrix}x \\ y \end{pmatrix}.
\]
Hence, given an $\SL$-representation $\rho$ of $\pi_1(E_K)$,
there exists the sequence of $\SL[n]$-representations $\sigma_n \circ \rho$.
\begin{remark}
If the eigenvalues of $A \in \SL$ are $\xi^{\pm 1}$,
then $\sigma_n (A)$ has the eigenvalues $\xi^{\mp(2j-1)}$ $(j=1, \ldots, n)$.
One can see this fact from the image by $\sigma_n$ of a diagonal matrix 
with respect to the standard basis
$\{x^{n-1}, x^{n-2}y, \ldots, xy^{n-2}, y^{n-1}\}$
of $V_n$.
\end{remark}

\begin{definition}
  \label{def:higher_dim_TAP}
  Under the assumptions of Definition~\ref{def:TAP_SL2},
  set $\rho_n = \sigma_n \circ \rho$.
  Suppose that $\det (\Phi_{\rho_n}(g_\ell - 1)) \not = 0$.
  Then the twisted Alexander polynomial for $\rho_n$ is defined as
  \[
  \TAP{K}{\rho_n}(t) =
  \frac{
    \det \left( \Phi_{\rho_n}(\frac{\partial r_i}{\partial g_j}) \right)_{
      \substack{1 \leq i \leq k-1 \\ 1 \leq j \leq k, j \not =\ell}}
  }{
    \det (\Phi_{\rho_n}(g_\ell - 1))
  }.
  \]
  We call $\TAP{K}{\rho_n}(t)$ {\it the $n$-dimensional twisted Alexander polynomial for $\rho$}.
\end{definition}
When some $\ell$ satisfies $\det (\Phi_{\rho_n}(g_\ell - 1)) \not = 0$ for all $n$,
we have the sequence of the twisted Alexander polynomial $\TAP{K}{\rho_n}(t)$.

By definition, the twisted Alexander polynomial could be a rational function.
There exists a sufficient condition for it to be a Laurent polynomial.
\begin{proposition}[Proposition~8 in~\cite{Wada94}]
  If a $\GL$-representation $\rho$ of $\pi_1(E_K)$ satisfies that
  there exists an element $\gamma$ in $[\pi_1(E_K), \pi_1(E_K)]$
  such that $\rho(\gamma)$ does not have the eigenvalue $1$, then
  the twisted Alexander polynomial $\Delta_{K, \rho}(t)$ is a Laurent polynomial.
\end{proposition}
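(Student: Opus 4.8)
The plan is to prove the stronger divisibility statement that the denominator $\det(\Phi_\rho(g_\ell-1))$ divides the numerator in $\C[t^{\pm 1}]$, by exhibiting $\Delta_{K,\rho}(t)$ as a single determinant divided by a nonzero constant. The starting point is the fundamental identity of Fox calculus: for any word $w$ in the free group on $g_1,\dots,g_k$ one has $w-1=\sum_{j=1}^{k}(\partial w/\partial g_j)(g_j-1)$, so applying the ring homomorphism $\Phi_\rho$ gives $\Phi_\rho(w)-I=\sum_j \Phi_\rho(\partial w/\partial g_j)\,\Phi_\rho(g_j-1)$, where the order of the matrix factors must be respected. Taking $w=r_i$ and using $r_i=1$ in $\pi_1(E_K)$ yields $\sum_j \Phi_\rho(\partial r_i/\partial g_j)\,\Phi_\rho(g_j-1)=0$, which is the relation among the block columns of the Fox matrix $A=\bigl(\Phi_\rho(\partial r_i/\partial g_j)\bigr)$.

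Next I would feed in the hypothesis. Since $\gamma\in[\pi_1(E_K),\pi_1(E_K)]$ we have $\alpha(\gamma)=1$, so $\Phi_\rho(\gamma)=\rho(\gamma)$ is a constant matrix and $U:=\rho(\gamma)-I$ is invertible over $\C$ by assumption. Choosing a word representing $\gamma$ and applying the fundamental identity to $w=\gamma$ produces a block row $c=\bigl(\Phi_\rho(\partial\gamma/\partial g_j)\bigr)_j$ of $n\times n$ blocks satisfying $\sum_j \Phi_\rho(\partial\gamma/\partial g_j)\,\Phi_\rho(g_j-1)=U$. Writing $b_j=\Phi_\rho(g_j-1)$, the key construction is to append $c$ as an extra block row beneath $A$, forming the square $kn\times kn$ matrix $P=\bigl(\begin{smallmatrix}A\\ c\end{smallmatrix}\bigr)$ over $\C[t^{\pm 1}]$.

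The heart of the argument is then a block-triangular factorization. Let $A_\ell, c_\ell$ denote the block column indexed by $\ell$, let $A_{\hat\ell}, c_{\hat\ell}$ denote the matrices with that block column deleted, and let $\beta=(b_j)_{j\neq\ell}$ be the stacked column of the remaining blocks. After moving block column $\ell$ to the last position (which changes $\det P$ only by a sign), I would multiply on the right by the block upper-triangular matrix $\left(\begin{smallmatrix}I & \beta\\ 0 & b_\ell\end{smallmatrix}\right)$, whose determinant is $\det(b_\ell)$. The relator identity $A_\ell b_\ell=-A_{\hat\ell}\beta$ kills the top-right block of the product, and the commutator identity $c_{\hat\ell}\beta+c_\ell b_\ell=U$ fills in its bottom-right block, so the product equals the block lower-triangular matrix $\left(\begin{smallmatrix}A_{\hat\ell} & 0\\ c_{\hat\ell} & U\end{smallmatrix}\right)$, whose determinant is $\det(A_{\hat\ell})\det(U)$. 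Comparing determinants gives $\pm\det(P)\det(b_\ell)=\det(A_{\hat\ell})\det(U)$, hence $\Delta_{K,\rho}(t)=\det(A_{\hat\ell})/\det(b_\ell)=\pm\det(P)/\det(U)$. Since $\det(U)\in\C^\times$ is a nonzero constant and $\det(P)\in\C[t^{\pm 1}]$, this realizes $\Delta_{K,\rho}(t)$ as a Laurent polynomial.

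I expect the only real subtlety to be bookkeeping: keeping the noncommutative matrix factors in the correct order when applying $\Phi_\rho$ to the Fox identities, and confirming that the augmented matrix $P$ is genuinely square with the block dimensions lining up. Conceptually, the single idea that does all the work is that appending the Fox-derivative row of a commutator $\gamma$ with $\rho(\gamma)-I$ invertible squares up the Fox matrix while contributing only the unit $\det(U)$ to the denominator; everything else is block linear algebra over the principal ideal domain $\C[t^{\pm 1}]$.
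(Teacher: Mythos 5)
Your argument is correct and complete; the block factorization, the use of the Fox fundamental identity for both the relators and the commutator word, and the final identity $\pm\det(P)\det(b_\ell)=\det(A_{\hat\ell})\det(U)$ all check out. One structural remark: the paper itself offers no proof of this statement---it is quoted from Wada \cite{Wada94}---so the only meaningful comparison is with Wada's original argument. Both proofs rest on the same key idea: augment the $(k-1)\times k$ block Fox matrix by the row of derivatives of a word representing $\gamma$, so that the contribution of that row to the denominator, $\det(\rho(\gamma)-\I)$, is a nonzero \emph{constant}. Wada implements this by a Tietze transformation: adjoin a new generator $y$ and relator $y\gamma^{-1}$, compute the invariant from the resulting deficiency-one presentation with the $y$-column deleted (its denominator is $\det\Phi_\rho(y-1)=\det(\rho(\gamma)-\I)\in\C^\times$), and then invoke his previously established well-definedness results (invariance under Tietze moves and under change of the deleted column). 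You instead prove the comparison identity directly by the block-triangular factorization; this is essentially the computation that underlies Wada's column-change lemma, so your proof inlines the invariance machinery rather than citing it. What this buys is a self-contained argument, together with the pleasant by-product that $\det(A_{\hat\ell})/\det(b_\ell)$ is independent of $\ell$ up to sign whenever such a $\gamma$ exists, since every choice equals $\pm\det(P)/\det(U)$. Two small points you should make explicit: the standing hypothesis of Definition~\ref{def:TAP_SL2} must be read as $\det\Phi_\rho(g_\ell-1)\neq 0$ (you divide by this determinant), and the identity $w-1=\sum_j(\partial w/\partial g_j)(g_j-1)$ holds in $\Z[F]$ for the free group $F$ on $g_1,\dots,g_k$ and is then pushed down through $\Z[F]\to\Z[\pi_1(E_K)]$ before applying $\Phi_\rho$; this is exactly what makes $\Phi_\rho(r_i)=\I$ and hence the column relation $A\mathbf{b}=0$ legitimate.
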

Moreover it was shown by~\cite[Theorem~$3$]{KitanoMorifuji:divisibility} that
$\TAP{K}{\rho}(t)$ is a Laurent polynomial for all non--abelian $\SL$-representations $\rho$.
Here the terminology ``non--abelian'' means that
the image $\rho(\pi_1(E_K))$ is not contained in an abelian subgroup in $\SL$.
We will find rational functions in the sequence of $\TAP{K}{\rho_n}(t)$
in Section~\ref{sec:twobridge}.

\subsection{Lin presentation}
In~\cite{Lin}, X.-S. Lin introduced a special presentation of a knot group
by using {\it a free Seifert surface}.
A Seifert surface is called {\it free} if its exterior $S^3 \setminus N(S)$
is a handlebody where $N(S)$ is an open tubular neighbourhood.
This means that $\pi_1(S^3 \setminus N(S))$ is a free group
if a Seifert surface $S$ is free and
the number of generators in $\pi_1(S^3 \setminus N(S))$ coincides with
twice of the genus of $S$.
It is known that every knot $K$ has free Seifert surfaces.
\begin{definition}
  Suppose that $S$ is a free Seifert surface with genus $g$ and 
  $\pi_1(S^3 \setminus N(S))$ is generated by $x_1, \ldots x_{2g}$
  which are the homotopy classes corresponding to the cores of $1$-handles in $S^3 \setminus N(S)$.
  We denote by $\mu$ a meridian of $K$. Then the knot group $\pi_1(E_K)$ is presented as
  \begin{equation}
    \label{eqn:Linpresen}
    \pi_1(E_K) = \la x_1, \ldots, x_{2g}, \mu \,|\, \mu a_i^+ \mu^{-1} = a_i^{-}, i=1, \ldots, 2g \ra
  \end{equation}
  where $a_i^{\pm}$ correspond to loops
  given by pushing up or down the spine $\vee_{i=1}^{2g} a_i$ of $S$
  along the normal direction in $N(S)$.
  We call the presentation~\eqref{eqn:Linpresen} {\it the Lin presentation} of $\pi_1(E_K)$
  associated with $S$.
\end{definition}

\begin{remark} 
  We have the linking number $\lk(x_i, K) = 0$ for every $x_i$ in~\eqref{eqn:Linpresen}.
  This is due to that the representative loop of $x_i$ lies in the outside of $S$.
  This means that each $x_i$ is a commutator in $\pi_1(E_K)$.
\end{remark}

\begin{example} \label{dtk}
  The double twist knot $J(2m ,2n)$ in Hoste-Shanahan's notation~\cite{HosteShanahan}
  is illustrated in Figure~\ref{fig:doubletwistknot}.
  Here both $m$ and $n$ are positive integers.
  The knot $J(2m, 2n)$ corresponds to $K_{-m,\,-n}$ in~\cite{Lin}.
  The right side in Figure~\ref{fig:doubletwistknot} shows a free Seifert surface $S$ of $J(2m, 2n)$.
  We can choose the loops $x_1$ and $x_2$ as generators of $\pi_1(S^3 \setminus N(S))$.
  The spine of $S$ is $a_1 \vee a_2$.
  The loop $a_1^+$ obtained by pushing up $a_1$ is homotopic to $x_1^m$.
  The loops $a_1^-$, $a_2^+$ and $a_2^-$ are homotopic to
  $x_1^mx_2^{-1}$, $x_2^{-n}x_1$ and $x_2^{-n}$ respectively.
  Thus we have the following Lin presentation of $\pi_1(E_{J(2m ,2n)})$:
  \[
  \pi_1(E_{J(2m, 2n)}) =
  \la x_1, x_2, \mu \mid \mu x_1^m \mu^{-1} = x_1^m x_2^{-1}, \, \mu x_2^{-n} x_1 \mu^{-1} =  x_2^{-n} \ra.
  \]
\end{example}

\begin{figure}
  \centering
  \includegraphics[scale=.45]{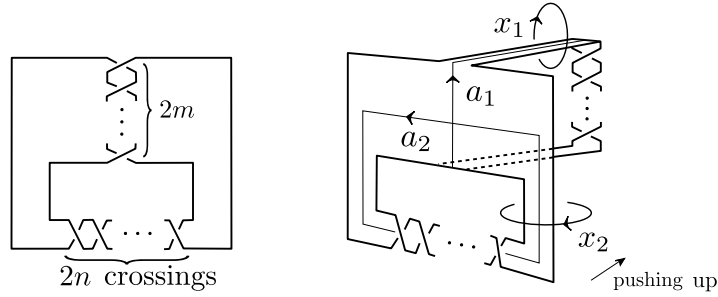}
  \caption{a diagram (left) and a free Seifert surface (right) of $J(2m, 2n)$}
  \label{fig:doubletwistknot}
\end{figure}

\begin{definition}
  Let $\rho$ be an $\SL$-representation of $\pi_1(E_K)$.
  We say that $\rho$ is {\it metabelian}
  if $\rho$ sends the commutator subgroup $[\pi_1(E_K), \pi_1(E_K)]$
  to an abelian subgroup in $\SL$.
  An $\SL$-representation $\rho$ is called {\it irreducible} if
  the action of $\rho(\pi_1(E_K))$ on $\C^2$ does not have any invariant subspaces
  except for $\{\bm{0}\}$ and $\C^2$.
\end{definition}

\begin{proposition}[Proposition~1.1 in~\cite{Nagasato07} and Proposition~2.8 in~\cite{yamaguchi:TAPmeta}]
  \label{prop:metabelianSLII}
  Let $\rho$ be an irreducible metabelian $\SL$-representation of a knot group $\pi_1(E_K)$.
  For any Lin presentation $\pi_1(E_K) = \la x_1, \ldots, x_{2g}, \mu \,|\,
  \mu a_i^{+} \mu^{-} = a_i^-, i=1,\ldots, 2g\ra$,
  there exists a matrix $C \in \SL$ such that 
  \[
  C \rho(x_i) C^{-1} =
  \begin{bmatrix}
    z_i & 0 \\
    0 & z_i^{-1}
  \end{bmatrix}\, (\forall i = 1, \ldots, 2g)
  \quad \hbox{and} \quad
  C\rho(\mu)C^{-1}
  =
  \begin{bmatrix}
    0 & 1 \\
    -1 & 0
  \end{bmatrix}.
  \]
  Here each $z_i$ is a root of unity whose order is a divisor of $\Delta_K(-1)$.
\end{proposition}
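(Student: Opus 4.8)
The plan is to exploit the Lin presentation together with the two hypotheses on $\rho$---metabelian and irreducible---to first put $\rho(x_i)$ and $\rho(\mu)$ into the asserted normal form, and then to read off the arithmetic constraint on the $z_i$ from the defining relations of the presentation.

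First I would record that each $x_i$ lies in $[\pi_1(E_K),\pi_1(E_K)]$ (as noted in the Remark following the Lin presentation, since $\lk(x_i,K)=0$). Because $\rho$ is metabelian, the matrices $\rho(x_1),\dots,\rho(x_{2g})$ all lie in a common abelian subgroup of $\SL$, hence pairwise commute and can be simultaneously triangularized. Here I would invoke irreducibility in the style of Clifford theory: the restriction of $\rho$ to the normal subgroup $[\pi_1(E_K),\pi_1(E_K)]$ is a sum of characters, and it can be neither scalar (otherwise $\rho$ would factor, up to the center, through the abelianization $\la t\ra$ and be reducible) nor a system of nontrivial parabolics sharing a single line (such a line would be $\rho(\mu)$-invariant, or would force $\rho(x_i)=\pm\I$ and again reducibility). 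Thus the $\rho(x_i)$ are simultaneously diagonalizable with two distinct characters $z_i,z_i^{-1}$, and $\rho(\mu)$ must interchange the two eigenlines, since otherwise $\rho$ would be reducible. Conjugating by the matrix $C$ that diagonalizes the $\rho(x_i)$, and then by a suitable diagonal matrix, I can arrange $C\rho(x_i)C^{-1}=\mathrm{diag}(z_i,z_i^{-1})$ and $C\rho(\mu)C^{-1}=\begin{bmatrix}0&1\\-1&0\end{bmatrix}$; the last diagonal conjugation normalizes the off-diagonal entries of the antidiagonal matrix while preserving the diagonal form of the $\rho(x_i)$.

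It remains to show each $z_i$ is a root of unity with order dividing $\Delta_K(-1)$. Since the $\rho(x_j)$ commute, the restriction of $\rho$ to $\la x_1,\dots,x_{2g}\ra$ factors through the abelianization $H_1(S^3\setminus N(S))\cong\Z^{2g}$, so $\rho(a_i^{\pm})$ depends only on the homology classes $[a_i^{\pm}]$. These classes are encoded by the Seifert matrix $V$ of $S$: writing $[a_i^{+}]=\sum_j V_{ij}[x_j]$ and $[a_i^{-}]=\sum_j V_{ji}[x_j]$, one gets $C\rho(a_i^{+})C^{-1}=\mathrm{diag}\bigl(\prod_j z_j^{V_{ij}},\,\prod_j z_j^{-V_{ij}}\bigr)$ and similarly for $a_i^{-}$ with $V^{T}$. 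Applying the defining relations $\mu a_i^{+}\mu^{-1}=a_i^{-}$, and using that conjugation by $\begin{bmatrix}0&1\\-1&0\end{bmatrix}$ swaps the two diagonal entries, the comparison of diagonal entries yields $\prod_j z_j^{(V+V^{T})_{ij}}=1$ for every $i$.

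Finally I would extract the root-of-unity conclusion by an adjugate manipulation. Set $M=V+V^{T}$, an integral matrix with $\det M=\pm\Delta_K(-1)\neq 0$ (the knot determinant is odd). Raising the $i$-th relation to the power $\mathrm{adj}(M)_{ki}$ and multiplying over $i$ collapses, via $\mathrm{adj}(M)\,M=(\det M)\,I$, to $z_k^{\det M}=1$; hence $z_k^{\Delta_K(-1)}=1$, so the order of $z_k$ divides $\Delta_K(-1)$. I expect the main obstacle to be the rigorous execution of the second step---ruling out the scalar and parabolic possibilities for the restriction of $\rho$ to $[\pi_1(E_K),\pi_1(E_K)]$ and confirming that irreducibility forces $\rho(\mu)$ to swap the eigenlines---together with pinning down the precise bookkeeping that identifies $[a_i^{\pm}]$ with the rows of $V$ and $V^{T}$; once these are in place, the concluding adjugate computation is routine.
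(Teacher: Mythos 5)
Your proof is correct, but note first that the paper itself contains no proof of this proposition: it is quoted from \cite{Nagasato07} (Proposition~1.1) and \cite{yamaguchi:TAPmeta} (Proposition~2.8), so the comparison is with those references rather than with any internal argument. The first half of your proof is essentially the standard one found there: metabelianity places $\rho([\pi_1(E_K),\pi_1(E_K)])$ in an abelian subgroup of $\SL$, which up to conjugation lies either in the diagonal torus or in $\{\pm u : u \ \text{unipotent upper triangular}\}$; irreducibility kills the scalar case (if $\rho$ kills the commutator subgroup up to sign then, since $\mu$ generates the abelianization, $\rho(\pi_1(E_K))$ is generated by $\rho(\mu)$ together with central elements, hence is abelian and reducible) and the parabolic case (the unique fixed line of a nontrivial unipotent part is $\rho(\pi_1(E_K))$-invariant by normality of the commutator subgroup); normality then makes $\rho(\mu)$ permute the two common eigenlines, and the swap is forced because the Lin generators $x_i$ and $\mu$ generate $\pi_1(E_K)$, so a non-swapping $\rho(\mu)$ would again give reducibility. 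Where you genuinely depart from the cited proofs is the arithmetic step: Nagasato and Yamaguchi obtain the root-of-unity claim by identifying irreducible metabelian representations with nontrivial characters of $H_1(\Sigma_2(K);\Z)$ of the double branched cover, a finite group of order $|\Delta_K(-1)|$ by Fox's formula, whereas you read the relations $\prod_j z_j^{(V+V^T)_{ij}}=1$ directly off the Lin relations $\mu a_i^+\mu^{-1}=a_i^-$ (conjugation by the antidiagonal matrix inverts a diagonal $\SL$-matrix) and finish with $\mathrm{adj}(M)M=(\det M)\I$ and $\det(V+V^T)=\pm\Delta_K(-1)\neq 0$. This is the same mathematics in disguise---$V+V^T$ is a presentation matrix of $H_1(\Sigma_2(K);\Z)$---but your version is self-contained at the level of the Lin presentation and avoids covering-space theory, at the cost of the two bookkeeping points you yourself flag; both are routine: the identities $[a_i^+]=\sum_j V_{ij}[x_j]$ and $[a_i^-]=\sum_j V_{ji}[x_j]$ hold with the duality convention $\lk(x_j,a_i)=\delta_{ij}$ and the standard relation $\lk(a_i^-,a_j)=\lk(a_i,a_j^+)$, and your conclusion is insensitive to sign or transpose conventions since only the symmetric matrix $V+V^T$ and $|\det(V+V^T)|$ enter the adjugate computation.
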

\begin{remark}
  It is known that the number of irreducible metabelian $\SL$-representations of $\pi_1(E_K)$
  is equal to $(|\Delta_K(-1)| - 1)/2$.
\end{remark}

\subsection{Relation to the Reidemeister torsion}
\label{subsec:relation_TAP_Rtorsion}
We refer to~\cite{KL, Kitano} for the details on the relation between
the twisted Alexander polynomial and the Reidemeister torsion.
Here we review the Reidemeister torsion from Fox differential
and  restrict our attention to the Reidemeister torsion 
for $E_K$ and
$\SL[n]$-representations $\rho_{n} = \sigma_n \circ \rho$ of $\pi_1(E_K)$.

Choose a presentation $\pi_1(E_K) = \la g_1, \ldots, g_k ,|\, r_1, \ldots, r_{k-1} \ra$ of deficiency $1$.
Let $\widetilde{\rho_n}$ be the linear extension of $\rho_n$ to
the group ring $\Z[\pi_1(E_K)]$.
By a similar argument to~\cite[Proposition~3.1]{Kitano}, one can show the following relation between
the twisted Alexander polynomial and the Reidemeister torsion of a knot exterior.
\begin{proposition}
  \label{prop:Rtorsion_TAP}
  Suppose that $1 \leq \exists \ell \leq k$ such that 
  \[\det(\widetilde{\rho_n}(g_\ell - 1)) \not = 0
  \quad \hbox{and}\quad 
  \det \left(
  \widetilde{\rho_n} \Big(\frac{\partial r_i}{\partial g_j} \Big)
  \right)_{\substack{1 \leq i \leq k-1 \\ 1 \leq j \leq k, j \not =\ell}}
  \not = 0
  \]
  where $\partial r_i / \partial g_j$ is the Fox differential of $r_i$ by $g_j$.
 Then the Reidemeister torsion of $\,\BT_{K, \, \rho}$ is expressed, up to sign, as  
  \begin{equation}
    \label{eqn:def_Rtorsion}
  \BT_{K, \, \rho_n} =
  \frac{
    \det \left(
        \widetilde{\rho_n} \Big(\frac{\partial r_i}{\partial g_j} \Big)
        \right)_{\substack{1 \leq i \leq k-1 \\ 1 \leq j \leq k, j \not =\ell}}
  }{
    \det \widetilde{\rho}(g_\ell - 1)
  }.
  \end{equation}
\end{proposition}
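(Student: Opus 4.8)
The plan is to realize $\BT_{K,\rho_n}$ as the Reidemeister torsion of a chain complex built from the deficiency-one presentation, and then show that the alternating product of determinants collapses to the displayed ratio of Fox-derivative determinants. First I would recall the standard CW-structure on $E_K$ associated with the presentation $\pi_1(E_K)=\la g_1,\ldots,g_k\mid r_1,\ldots,r_{k-1}\ra$: a single $0$-cell, one $1$-cell for each generator $g_j$, and one $2$-cell for each relator $r_i$. Lifting to the universal cover and tensoring the cellular chain complex $C_*(\widetilde{E_K})$ with the $\C^n$-module structure coming from $\rho_n$ (via the linear extension $\widetilde{\rho_n}$), I obtain a based complex
\[
0 \to \C^n \otimes \Z[\pi_1]^{\,k-1} \xrightarrow{\ \partial_2\ } \C^n \otimes \Z[\pi_1]^{\,k} \xrightarrow{\ \partial_1\ } \C^n \otimes \Z[\pi_1] \to 0
\]
whose boundary maps are, by the fundamental formula of Fox calculus, given by the matrices $\bigl(\widetilde{\rho_n}(\partial r_i/\partial g_j)\bigr)_{i,j}$ for $\partial_2$ and $\bigl(\widetilde{\rho_n}(g_j-1)\bigr)_j$ for $\partial_1$.

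Next I would verify acyclicity. The hypothesis $\det(\widetilde{\rho_n}(g_\ell-1))\neq 0$ guarantees that the block of $\partial_1$ corresponding to $g_\ell$ is invertible, so $\partial_1$ is surjective and the $n\times n$ submatrix gives a splitting; the second hypothesis, that the $(k-1)\times(k-1)$ Fox-derivative minor obtained by deleting the column $j=\ell$ is nonsingular, makes $\partial_2$ injective onto the complementary subspace. Together these force $H_*=0$, so the torsion is defined.

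The computational heart is then to evaluate the torsion of this short acyclic complex. The plan is to choose the distinguished bases so that the torsion equals the alternating product of the determinants of the chosen square blocks: the numerator is $\det\bigl(\widetilde{\rho_n}(\partial r_i/\partial g_j)\bigr)_{1\le i\le k-1,\ j\neq \ell}$ coming from $\partial_2$, and the denominator is $\det\widetilde{\rho}(g_\ell-1)$ coming from the invertible block of $\partial_1$. I would mimic the argument of \cite[Proposition~3.1]{Kitano} almost verbatim, the only adjustments being bookkeeping for the $n$-dimensional coefficients rather than the $2$-dimensional ones; since $\sigma_n\circ\rho$ is an honest $\SL[n]$-representation, every step in Kitano's derivation applies with $n$ in place of $2$.

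\textbf{Main obstacle.} I expect the subtle point to be the control of the sign and the well-definedness of the torsion, rather than the determinant bookkeeping. Since the torsion depends a priori on the choice of lifts of cells, of the geometric bases, and of the distinguished generator $g_\ell$, I would need to invoke the simple-homotopy invariance of Reidemeister torsion (and the fact, already recorded in the Remark after Definition~\ref{def:TAP_SL2}, that changing $\ell$ alters the ratio only by $\pm 1$) to conclude that equation~\eqref{eqn:def_Rtorsion} holds up to sign independently of these choices. This is exactly why the statement is phrased ``up to sign,'' and I would not attempt to pin down the sign further.
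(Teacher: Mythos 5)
Your proposal takes essentially the same route as the paper: the paper gives no independent argument for Proposition~\ref{prop:Rtorsion_TAP} but simply invokes the proof of \cite[Proposition~3.1]{Kitano}, which is precisely what you reconstruct --- the presentation $2$-complex with boundary maps given by Fox calculus, the acyclicity check from the two nonvanishing determinants, and the torsion computed as the ratio of the two block determinants, with the sign ambiguity absorbed by invariance under the choices involved. Your treatment of the well-definedness issues (choice of $\ell$, of lifts, and simple-homotopy invariance) matches the level of care the paper delegates to Kitano's argument, so the proposal is correct and in line with the paper.
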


\begin{remark}
  We will use~\eqref{eqn:def_Rtorsion} instead of the definition of $\BT_{K, \, \rho_n}$.
  We touch the well-definedness of $\BT_{K, \, \rho_n}$ without proofs.
  \begin{itemize}
  \item The right hand side of~\eqref{eqn:def_Rtorsion} is equal to
    $\TAP{K}{\rho_n}(1)$.
    Hence it is independent of the choice of $\ell$.
    In the case of $n=2N$,
    the right hand side of~\eqref{eqn:def_Rtorsion} is determined within sign.
  \item
    The assumption in Proposition~\ref{prop:Rtorsion_TAP} means that
    the twisted chain complex used to define the Reidemeister torsion is acyclic.
    For details on the definition of $\BT_{K, \, \rho_n}$, see the review in~\cite{tranYam:RtosionTroidal}.
  \end{itemize}
\end{remark}

\section{Asymptotic behavior for metabelian representations}
\label{sec:general_formula}
Here and subsequently, 
$\rho$ denotes an irreducible metabelian $\SL$-representation of 
a knot group $\pi_1(E_K)$.
Choose a Lin presentation of $\pi_1(E_K)$:
\[
\pi_1(E_K) = 
\la x_1, \dots, x_{2g}, \mu \mid \mu a_i^+ \mu^{-1} = a_i^-, \, i=1, \dots, 2g\ra.
\]
By Proposition~\ref{prop:metabelianSLII}, 
up to conjugation, we may assume that $\rho$ has the following form
\[
\rho(x_i) = 
\begin{bmatrix}
  z_i & 0 \\
  0 & z_i^{-1} \end{bmatrix}, \quad
\rho(\mu) =
\begin{bmatrix}
  0 & 1 \\
  -1 & 0 \end{bmatrix}
\]
where each $z_i$ is a root of unity whose order is a divisor of $\Delta_K(-1)$.
\begin{proposition}
  \label{prop:rep_decompo}
  The induced $\SL[2N]$-representation $\rho_{2N}$ is conjugate to
  the direct sum of metabelian representations as 
  \[\rho_{2N} \underset{\mathrm{conj.}}{\sim} \oplus_{j=1}^N \psi_j\]
  where every $\psi_j$ is a metabelian $\SL$-representation of $\pi_1(E_K)$,
  given by
  \begin{equation}
    \label{eqn:psi}
  \psi_j (x_i) =
  \begin{bmatrix}
    z_i^{1-2j} & 0 \\
    0 & z_i^{2j-1} \end{bmatrix}, \quad
  \psi_j (\mu) = 
  \begin{bmatrix}
    0 & 1 \\
    -1 & 0 \end{bmatrix}.
  \end{equation}
\end{proposition}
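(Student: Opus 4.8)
The plan is to realize the decomposition concretely on the representation space $V_{2N}$ of $\sigma_{2N}$, working in the standard monomial basis $e_k := x^{2N-1-k}y^k$ $(k=0,\dots,2N-1)$, and to exhibit the summands $\psi_j$ as the restrictions of $\rho_{2N}$ to a family of $\rho_{2N}$-invariant planes. First I would record how the two generators act in this basis. Since $\rho(x_i)$ is diagonal, the defining rule $A\cdot p(x,y)=p(x',y')$, where $(x',y')$ is obtained from $(x,y)$ by applying $A^{-1}$, gives that $\sigma_{2N}(\rho(x_i))$ is again diagonal, with $\sigma_{2N}(\rho(x_i))\,e_k = z_i^{2k-(2N-1)}e_k$. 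The same computation for $\rho(\mu)=\left[\begin{smallmatrix}0&1\\-1&0\end{smallmatrix}\right]$ yields $\sigma_{2N}(\rho(\mu))\,e_k=(-1)^{2N-1-k}e_{2N-1-k}$, so the meridian interchanges $e_k$ and $e_{2N-1-k}$ up to a sign.

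Next I would pair up the basis vectors. For $j=1,\dots,N$ set $W_j=\mathrm{span}\{e_{N-j},\,e_{N-1+j}\}$; because the index pairs $\{N-j,\,N-1+j\}$ partition $\{0,1,\dots,2N-1\}$, these $N$ planes give a direct sum decomposition of $V_{2N}$. By the two formulas above each $W_j$ is invariant under $\sigma_{2N}(\rho(x_i))$ (it is a sum of two eigenlines) and under $\sigma_{2N}(\rho(\mu))$ (which swaps its two spanning vectors). As the $x_i$ and $\mu$ generate $\pi_1(E_K)$ in the Lin presentation, each $W_j$ is $\rho_{2N}$-invariant, whence $\rho_{2N}\cong\bigoplus_{j=1}^N \rho_{2N}|_{W_j}$. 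Reading off the eigenvalues from the first step shows that, in the ordered basis $(e_{N-j},e_{N-1+j})$, one has $\sigma_{2N}(\rho(x_i))|_{W_j}=\mathrm{diag}(z_i^{1-2j},z_i^{2j-1})=\psi_j(x_i)$, so the diagonal blocks already agree on the nose.

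The one real subtlety is the meridian block, which is the step I expect to require the most care. In the ordered basis $(e_{N-j},e_{N-1+j})$ the operator $\sigma_{2N}(\rho(\mu))|_{W_j}$ takes the shape $\left[\begin{smallmatrix}0&\varepsilon\\-\varepsilon&0\end{smallmatrix}\right]$ with $\varepsilon=(-1)^{N-j}$, which equals $\psi_j(\mu)$ when $\varepsilon=1$ but differs from it by a sign when $\varepsilon=-1$. I would remove this discrepancy by conjugating $W_j$ by the diagonal matrix $\mathrm{diag}(1,\varepsilon)$: this sends $\left[\begin{smallmatrix}0&\varepsilon\\-\varepsilon&0\end{smallmatrix}\right]$ to $\left[\begin{smallmatrix}0&1\\-1&0\end{smallmatrix}\right]=\psi_j(\mu)$ while, being diagonal, leaving the $x_i$-block untouched. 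Assembling these plane-by-plane conjugations into a single change of basis gives $\rho_{2N}\underset{\mathrm{conj.}}{\sim}\bigoplus_{j=1}^N\psi_j$. Finally, each $\psi_j$ lies in $\SL$ since $\det\psi_j(x_i)=z_i^{1-2j}z_i^{2j-1}=1$ and $\det\psi_j(\mu)=1$, and it is metabelian because it is a subrepresentation of $\rho_{2N}=\sigma_{2N}\circ\rho$: as $\rho$ is metabelian, $\rho([\pi_1(E_K),\pi_1(E_K)])$ is abelian, hence so is its homomorphic image under $\sigma_{2N}$, and restricting a commuting family of operators to the invariant plane $W_j$ keeps it commuting. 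This argument also shows, without verifying the Lin relations by hand, that the displayed formula for $\psi_j$ genuinely defines a representation, since it coincides with $\rho_{2N}|_{W_j}$.
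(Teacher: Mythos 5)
Your proof is correct and takes essentially the same route as the paper's: you use the identical invariant planes $W_j = \mathrm{span}\{x^{N+j-1}y^{N-j},\, x^{N-j}y^{N+j-1}\}$, obtain the same diagonal $x_i$-blocks and the same meridian block with sign $(-1)^{N-j}$, and repair the sign by a diagonal conjugation --- your $\mathrm{diag}(1,\varepsilon)$ induces exactly the same conjugation as the paper's $\mathrm{diag}(\sqrt{-1},-\sqrt{-1})$, since the two matrices differ by a scalar. The only difference is that you spell out what the paper compresses into ``a direct computation'' (the eigenvalue bookkeeping, invariance of the $W_j$, and why each $\psi_j$ is a metabelian representation), which is fine but not a different argument.
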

\begin{proof}
  Set $W_j = \mathrm{span}_{\C}\la x^{N+j-1}y^{N-j}, x^{N-j}y^{N+j-1}\ra$ in $V_{2N}$
  as the $2$-dimensional subspace
  spanned by $x^{N+j-1}y^{N-j}$ and $x^{N-j}y^{N+j-1}$.
  It follows from a direct computation that
  the restriction of $\rho_{2N}$ to $W_j$ is expressed as
  \[
  \rho_{2N}\big|_{W_j} (x_i) =
  \begin{bmatrix}
    z_i^{1-2j} & 0 \\
    0 & z_i^{2j-1} \end{bmatrix}, \quad
  \rho_{2N}\big|_{W_j} (\mu) = (-1)^{N-j} 
  \begin{bmatrix}
    0 & 1 \\
    -1 & 0 \end{bmatrix}.
  \]
  Taking conjugation by
  $\left(\begin{smallmatrix}
    \sqrt{-1} & 0\\
    0 & -\sqrt{-1}
  \end{smallmatrix}\right)$ if necessary,
  the restriction $\rho_{2N}\big|_{W_j}$ is conjugate to 
  $\psi_j$ given by~\eqref{eqn:psi}.
\end{proof}
By Proposition~\ref{prop:rep_decompo}, one can see that $\rho_{2N}$ has a periodicity
in the direct sum decomposition.
The asymptotic behavior of $\TAP{K}{\rho_{2N}}(t)$ is derived from this periodicity.
\begin{theorem}
  \label{thm:main_I}
  Let $\rho$ be an irreducible metabelian $\SL$-representation of $\pi_1(E_K)$.
  Then there exists metabelian representations $\psi_1, \ldots, \psi_p$ such that
  $\psi_1 = \rho$ and the limit of the leading coefficient of $\,\log \TAP{K}{\rho_{2N}}(t)$ is expressed as
  \begin{equation}
    \label{eqn:main_I}
    \lim_{N \to \infty} \frac{\log \TAP{K}{\rho_{2N}}(t)}{2N}
    = \frac{1}{2p} \sum_{j=1}^{p} \log \TAP{K}{\psi_j}(t).
  \end{equation}
\end{theorem}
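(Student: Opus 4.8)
The plan is to reduce the left-hand side to a sum over the summands $\psi_j$ of Proposition~\ref{prop:rep_decompo} via multiplicativity of the twisted Alexander polynomial, and then to average using the periodicity of the family $\{\psi_j\}_j$.

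First I would fix the Lin presentation $\pi_1(E_K) = \la x_1, \dots, x_{2g}, \mu \mid \cdots \ra$ and take the meridian $\mu$ as the distinguished generator $g_\ell$. Since $\rho_{2N}$ is conjugate to $\bigoplus_{j=1}^{N} \psi_j$, conjugation by the constant matrix realizing this decomposition commutes with the $\C[t^{\pm1}]$-scalars, so it simultaneously block-diagonalizes every matrix $\Phi_{\rho_{2N}}(\,\cdot\,)$ into the $N$ blocks $\Phi_{\psi_j}(\,\cdot\,)$. After a consistent reordering of rows and columns, the full Fox-derivative matrix $\big(\Phi_{\rho_{2N}}(\partial r_i/\partial x_j)\big)$ becomes block diagonal with diagonal blocks $\big(\Phi_{\psi_j}(\partial r_i/\partial x_j)\big)$, and $\Phi_{\rho_{2N}}(\mu-1)$ splits into the blocks $\Phi_{\psi_j}(\mu-1)$. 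Because $\psi_j(\mu) = \left(\begin{smallmatrix} 0 & 1 \\ -1 & 0\end{smallmatrix}\right)$ for every $j$, one computes $\det\Phi_{\psi_j}(\mu-1) = t^2+1 \neq 0$, so each $\TAP{K}{\psi_j}(t)$ is well defined and multiplicativity of the determinant yields
\[
\TAP{K}{\rho_{2N}}(t) = \prod_{j=1}^{N} \TAP{K}{\psi_j}(t), \qquad \log \TAP{K}{\rho_{2N}}(t) = \sum_{j=1}^{N} \log \TAP{K}{\psi_j}(t).
\]
The sign indeterminacy of the twisted Alexander polynomial affects the logarithm only by a bounded additive term, which disappears after division by $2N$.

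Next I would make the periodicity explicit. By~\eqref{eqn:psi} the representation $\psi_j$ depends on $j$ only through the eigenvalues $z_i^{\pm(2j-1)}$, and since each $z_i$ is a root of unity, extending the formula to all $j \in \Z$ produces a periodic family: $\psi_{j+p} = \psi_j$ precisely when $z_i^{2p}=1$ for all $i$, and I would let $p$ be the least such positive integer. Evaluating at $j=0$ gives $\psi_0(x_i) = \mathrm{diag}(z_i, z_i^{-1})$ and $\psi_0(\mu) = \left(\begin{smallmatrix} 0 & 1 \\ -1 & 0\end{smallmatrix}\right)$, that is $\psi_0 = \rho$; relabeling one complete period as $\psi_1, \dots, \psi_p$ we may therefore arrange $\psi_1 = \rho$. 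Since the orders of the $z_i$ divide the odd integer $\Delta_K(-1)$, this $p$ also divides $|\Delta_K(-1)|$, which is the content of the Remark.

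Finally I would carry out the averaging. Writing $N = kp + r$ with $0 \le r < p$ and using periodicity,
\[
\sum_{j=1}^{N} \log \TAP{K}{\psi_j}(t) = k \sum_{j=1}^{p} \log \TAP{K}{\psi_j}(t) + \sum_{j=1}^{r} \log \TAP{K}{\psi_j}(t),
\]
where the remainder is a sum of at most $p-1$ fixed functions of $t$. Dividing by $2N$ and letting $N \to \infty$ (so that $k \to \infty$ and $k/N \to 1/p$) kills the remainder and yields~\eqref{eqn:main_I}. I expect the main obstacle to be the multiplicativity step: one must verify carefully that the single conjugation of Proposition~\ref{prop:rep_decompo} block-diagonalizes the entire Fox-derivative matrix after a consistent reordering of the basis, and that the denominator never degenerates, so that the factorization of the \emph{ratio} defining $\TAP{K}{\rho_{2N}}(t)$ is genuinely valid. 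Once this is secured, the periodicity observation and the Cesàro-type averaging are routine.
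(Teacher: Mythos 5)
Your proposal is correct and follows essentially the same route as the paper: decompose $\rho_{2N}$ via Proposition~\ref{prop:rep_decompo}, factor $\TAP{K}{\rho_{2N}}(t)=\prod_{j=1}^N \TAP{K}{\psi_j}(t)$, and average over the period $p$ (your least $p$ with $z_i^{2p}=1$ coincides with the paper's l.c.m.\ of the orders of the $z_i$, since those orders are odd). The differences are only in self-containedness: you verify the block-diagonalization and the Ces\`aro averaging inline, where the paper appeals to conjugation-invariance of the twisted Alexander polynomial and cites~\cite[Lemma~3.11]{Yamaguchi:asymptoticsRtorsion}, and your relabeling trick makes the normalization $\psi_1=\rho$ explicit, a point the paper's indexing leaves implicit (there $\psi_1$ is only conjugate to $\rho$).
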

\begin{proof}
  The twisted Alexander polynomial has the invariance
  under conjugation of representations.
  We can consider the twisted Alexander polynomials $\TAP{K}{\rho_{2N}}(t)$ for
  $\SL[2N]$-representations $\rho_{2N}$ up to conjugation.
  Proposition~\ref{prop:rep_decompo} shows that
  $\rho_{2N}$ turns into the direct sum of metabelian representations $\psi_j$.
  Under this decomposition of $\rho_{2N}$, it holds that
  \[\TAP{K}{\rho_{2N}}(t) = \prod_{j=1}^N \TAP{K}{\psi_j}(t).\]
  Let $p$ be the l.c.m of orders of the roots of unity $z_1, \ldots, z_{2g}$.
  Note that the l.c.m $p$ is odd
  since the order of $z_i$ is a divisor of the odd integer $\Delta_K(-1)$.
  It is easy seen that $\psi_{j+p} = \psi_j$ for all $j$.
  This implies that $\{\TAP{K}{\psi_j}(t)\}_{j=1, 2, \ldots}$ is
  a periodic sequence of rational functions in $t$
  with period $p$. 
  Hence it follows from~\cite[Lemma~3.11]{Yamaguchi:asymptoticsRtorsion} that
  \[
  \lim_{N \to \infty} \frac{\log \TAP{K}{\rho_{2N}}(t)}{2N}
  = \lim_{N \to \infty} \frac{ \sum_{j=1}^N \log \TAP{K}{\psi_j}(t)}{2N}
  = \frac{1}{2p} \sum_{j=1}^{p} \log \TAP{K}{\psi_j}(t).
  \]
\end{proof}
\begin{remark}
  The period $p$ is the l.c.m. of divisors of $\Delta_K(-1)$. Hence $p$ is also a divisor of $\Delta_K(-1)$.
\end{remark}
Assume that $\TAP{K}{\psi_j}(1) \not = 0$ for all $j=1, \ldots, p$.
Under this assumption, we can also consider the asymptotic behavior of the Reidemeister torsion 
from that of the twisted Alexander polynomial.
\begin{corollary}
  \label{cor:main_I}
  Suppose that $\TAP{K}{\psi_j}(1) \not = 0$ for $1 \leq \forall j \leq p$.
  Then we can define the Reidemeister torsion $\BT_{K, \, \rho_{2N}}$ for all $N$.
  The growth order of $\,\log|\BT_{K, \, \rho_{2N}}|$ is the same as $2N$. 
  The limit of the leading coefficient of $\,\log|\BT_{K, \, \rho_{2N}}|$ is expressed as
  \begin{equation}
    \label{eqn:Rtorsion_general}
    \lim_{N \to \infty} \frac{\log |\BT_{K, \, \rho_{2N}}|}{2N} 
    = \frac{1}{2p} \sum_{j=1}^{p} \log |\TAP{K}{\psi_j}(1)|.
  \end{equation}
\end{corollary}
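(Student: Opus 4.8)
The plan is to deduce the corollary from Theorem~\ref{thm:main_I} by specializing everything to $t=1$ and passing through the dictionary between the twisted Alexander polynomial and the Reidemeister torsion recorded in Proposition~\ref{prop:Rtorsion_TAP}. The key observation is that the corollary is \emph{not} obtained by naively substituting $t=1$ into the functional limit~\eqref{eqn:main_I}, since a priori $\TAP{K}{\rho_{2N}}(t)$ could vanish or acquire a pole at $t=1$. Instead I would rerun the periodicity argument underlying Theorem~\ref{thm:main_I} on the numerical sequence $\{\TAP{K}{\psi_j}(1)\}_j$, whose term-by-term nonvanishing is precisely what the hypothesis $\TAP{K}{\psi_j}(1)\neq 0$ for $1\le j\le p$ provides.

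First I would settle well-definedness. By the remark following Proposition~\ref{prop:Rtorsion_TAP}, the right-hand side of~\eqref{eqn:def_Rtorsion} equals $\TAP{K}{\rho_{2N}}(1)$, and the twisted chain complex defining $\BT_{K,\,\rho_{2N}}$ is acyclic exactly when this value is nonzero, in which case $\BT_{K,\,\rho_{2N}}$ equals $\TAP{K}{\rho_{2N}}(1)$ up to sign. From the direct sum decomposition of Proposition~\ref{prop:rep_decompo} and the multiplicativity of the twisted Alexander polynomial under direct sums used in the proof of Theorem~\ref{thm:main_I}, one has the product formula $\TAP{K}{\rho_{2N}}(1) = \prod_{j=1}^N \TAP{K}{\psi_j}(1)$. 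Since $\psi_{j+p} = \psi_j$, each factor coincides with one of $\TAP{K}{\psi_1}(1), \ldots, \TAP{K}{\psi_p}(1)$, all nonzero by hypothesis; hence $\TAP{K}{\rho_{2N}}(1) \neq 0$ for every $N$, so $\BT_{K,\,\rho_{2N}}$ is defined for all $N$ and $|\BT_{K,\,\rho_{2N}}| = |\TAP{K}{\rho_{2N}}(1)|$.

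Taking absolute values and logarithms in the product formula yields $\log|\BT_{K,\,\rho_{2N}}| = \sum_{j=1}^N \log|\TAP{K}{\psi_j}(1)|$, a Ces\`aro-type partial sum of the $p$-periodic numerical sequence $\{\log|\TAP{K}{\psi_j}(1)|\}_j$. Dividing by $2N$ and applying the same averaging result~\cite[Lemma~3.11]{Yamaguchi:asymptoticsRtorsion} that powered Theorem~\ref{thm:main_I} gives the limit $\tfrac{1}{2p}\sum_{j=1}^p \log|\TAP{K}{\psi_j}(1)|$, which is~\eqref{eqn:Rtorsion_general}. Because this limit is a finite constant, $\log|\BT_{K,\,\rho_{2N}}|$ grows at most linearly in $N$ with a well-defined leading coefficient, so its growth order is the same as $2N$, in contrast with the $n^2$ growth for the holonomy representation.

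I expect the only genuine obstacle to be the bookkeeping around well-definedness: one must confirm that the nonvanishing assumed at the $p$ representatives $\psi_1,\ldots,\psi_p$ really does propagate, via the periodicity $\psi_{j+p}=\psi_j$ and the product formula, to acyclicity of the full $2N$-dimensional twisted complex for \emph{every} $N$ simultaneously, and that the sign ambiguity in Proposition~\ref{prop:Rtorsion_TAP} is harmless once absolute values are taken. Everything past that point is the $t=1$ echo of the computation already carried out for Theorem~\ref{thm:main_I}.
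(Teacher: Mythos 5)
Your proof is correct and takes essentially the same approach as the paper's: Proposition~\ref{prop:rep_decompo} plus multiplicativity gives $\TAP{K}{\rho_{2N}}(1)=\prod_{j=1}^N \TAP{K}{\psi_j}(1)$, periodicity and the hypothesis give nonvanishing for every $N$ (hence acyclicity and $\BT_{K,\,\rho_{2N}}=\TAP{K}{\rho_{2N}}(1)$ via Proposition~\ref{prop:Rtorsion_TAP}, where the paper additionally records $\det(\rho_{2N}(\mu)-\I)=2^N\neq 0$ for the denominator condition), and the same averaging lemma yields the limit. The only difference is cosmetic: the paper concludes by setting $t=1$ in~\eqref{eqn:main_I}, which is legitimate here since each $\TAP{K}{\psi_j}(t)$ has denominator $t^2+1\neq 0$ at $t=1$, while you re-run the Ces\`aro averaging on the numerical sequence $\{\log|\TAP{K}{\psi_j}(1)|\}$; these are the same computation.
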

\begin{proof}
  Proposition~\ref{prop:rep_decompo} shows that 
  $\rho_{2N}$ is conjugate to
  the direct sum $\oplus_{j=1}^N \psi_j$ of $\psi_j$ given by~\eqref{eqn:psi}.
  The twisted Alexander polynomial $\TAP{K}{\rho_{2N}}(t)$ turns into
  the product $\prod_{j=1}^N \TAP{K}{\psi_j}(t)$.
  Since $\psi_{j+p} = \psi_j$ for any $j$, the assumption
  $\TAP{K}{\psi_{j}}(1) \not = 0$
  implies that $\TAP{K}{\rho_{2N}}(1) \not = 0$ for all $N$.
  The eigenvalues of $\rho(\mu)$ are $\pm \sqrt{-1}$.
  One can see that $\det(\rho_{2N}(\mu) - \I) = 2^N$.
  By definition, it holds for all $N$ that  
  \[
  \det \left(
  \widetilde{\rho_{2N}} \Big(\frac{\partial r_k}{\partial x_l} \Big)
  \right)_{\substack{1 \leq k \leq 2g \\ 1 \leq l \leq 2g}}
  \not = 0
  \]
  where $r_k = \mu a_k^+ \mu^{-1} (a_k^-)^{-1}$
  in a Lin presentation.

  It follows from Proposition~\ref{prop:Rtorsion_TAP} that 
  $\BT_{K, \, \rho_{2N}}=\TAP{K}{\rho_{2N}}(1)$
  for any $N$.
  Taking $t=1$, our claim~\eqref{eqn:Rtorsion_general} follows from the equality~\eqref{eqn:main_I}.
\end{proof}

\section{Genus one two--bridge knots}
\label{sec:twobridge}

Genus one two--bridge knots are double twist knots $J(2m, \pm 2n)$ in 
Hoste-Shanahan's notation \cite{HosteShanahan}, 
see also \cite{Lin}. Here $m$ and $n$ are positive integers.
The Alexander polynomial of $K = J(2m, \pm 2n)$ is $$\Delta_K(t) = mn t^2 + (1 \mp 2mn)t +mn.$$

\subsection{The case of \boldmath$K=J(2m,2n)$}
Then $(|\Delta_K(-1)|-1)/2 = 2mn -1$ and 
hence there are $2mn-1$ irreducible metabelian representations $\rho_i \co \pi_1(E_K) \to \SL$,
where $1 \le i \le 2mn-1$.

By Example \ref{dtk} a Lin presentation of $K = J(2m,2n)$ is
\[\pi_1(E_{K})
= \la x_1, x_2, \mu \mid
\mu x_1^m \mu^{-1} = x_1^m x_2^{-1}, \, \mu x_2^{-n} x_1 \mu^{-1} =  x_2^{-n} \ra.\]
Up to conjugation, we may assume that 
\[
\rho_i(x_1) =
\begin{bmatrix}
  \xi^i & 0 \\
  0 & \xi^{-i} 
\end{bmatrix}, \quad
\rho_i(x_2) = 
\begin{bmatrix}
  \xi^{2mi} & 0 \\
  0 & \xi^{-2mi}
\end{bmatrix}, \quad
\rho_i(\mu) = 
\begin{bmatrix}
  0 & 1 \\
  -1 & 0 \end{bmatrix},
\]
where $\xi=e^{2\pi \sqrt{-1}/(4mn-1)}$ and $1 \le i \le 2mn-1$. 
By Proposition~\ref{prop:rep_decompo}, we have 
\[
(\rho_i)_{2N} = \sigma_{2N} \circ \rho_i
\underset{\mathrm{conj.}}{\sim} \bigoplus_{j=1}^{N} \psi_{i,j}
\]
where 
\[
\psi_{i,j}(x_1) = 
\begin{bmatrix}
  \xi^{i(1-2j)} & 0 \\
  0 & \xi^{i(2j-1)} \end{bmatrix}, \quad
\psi_{i,j}(x_2) =
\begin{bmatrix}
  \xi^{2mi(1-2j)} & 0 \\
  0 & \xi^{2mi(2j-1)} \end{bmatrix}, \quad
\psi_{i,j}(\mu) =
\begin{bmatrix}
  0 & 1 \\
  -1 & 0
\end{bmatrix}.
\]

Let $r_1 = \mu x_1^m \mu^{-1} x_2 x_1^{-m}$ and
$r_2 = \mu x_2^{-n} x_1 \mu^{-1} x_2^{n}$. 
We have
\[
\Delta_{{K}, \, \psi_{i,j}}(t)
= \det \begin{bmatrix}
\Phi_{\psi_{i,j}}(\frac{\partial r_1}{\partial x_1}) 
& \Phi_{\psi_{i,j}}(\frac{\partial r_1}{\partial x_2}) \\
\Phi_{\psi_{i,j}}(\frac{\partial r_2}{\partial x_1}) 
& \Phi_{\psi_{i,j}}(\frac{\partial r_2}{\partial x_2})
\end{bmatrix} 
\Big/ \det \Phi_{\psi_{i,j}}(1-\mu),
\]
where 
\begin{align*}
\frac{\partial r_1}{\partial x_1} 
&= \mu(1-x_1^m \mu^{-1}x_2 x_1^{-m}) (1 + x_1 + \cdots + x_1^{m-1}) \\
&= \mu (1 - \mu^{-1})(1 + x_1 + \cdots + x_1^{m-1}),\\
\frac{\partial r_1}{\partial x_2} &= \mu x_1^m \mu^{-1},\\
\frac{\partial r_2}{\partial x_1} &= \mu x_2^{-n},\\
\frac{\partial r_2}{\partial x_2} 
&= \mu (-1 + x_2^{-n} x_1 \mu^{-1} x_2^{n})(1+x_2^{-1}+ \cdots+ x_2^{-(n-1)}) x_2^{-1} \\
&= \mu (-1 + \mu^{-1})(1+x_2^{-1}+ \cdots+ x_2^{-(n-1)})x_2^{-1}.
\end{align*}

For $k \ge 0$ and $b \in \C$ let $\delta_k(b) = 1 + b + \cdots + b^k$. 
Note that $(1+b^{k+1}) \delta_k(b) = \delta_{2k+1}(b)$. 

\begin{proposition}
  \label{prop:main}
  The twisted Alexander polynomial $\TAP{K}{\psi_{i, j}}(t)$ is expressed as follows.
  \begin{itemize}
  \item[(a)] If $\xi^{i(2j-1)} \not= 1$, then 
  \[
    \Delta_{{K}, \, \psi_{i,j}}(t) 
    = ( \xi^{i(2j-1)} )^{1+m-2mn} \left( \delta_{m-1}(\xi^{i(2j-1)}) \delta_{n-1}(\xi^{2m i(2j-1)}) \right)^2 (t^2+1).
  \]
  \item[(b)] If $\xi^{i(2j-1)} = 1$, then 
  \[
  \Delta_{{K},\psi_{i,j}}(t)
  = \frac{m^2n^2t^4 + (2m^2n^2 - 4mn +1)t^2 + m^2n^2}{t^2+1}.
  \]
  \end{itemize}
\end{proposition}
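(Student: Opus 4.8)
The plan is to compute the $4\times 4$ numerator determinant and the scalar denominator $\det\Phi_{\psi_{i,j}}(1-\mu)$ directly, and then to split into the two cases according to whether $a:=\xi^{i(2j-1)}$ equals $1$. Throughout I abbreviate $s:=\delta_{m-1}(a)$ and $u:=\delta_{n-1}(a^{2m})$, and I write $J:=\psi_{i,j}(\mu)=\left[\begin{smallmatrix}0&1\\-1&0\end{smallmatrix}\right]$. Since $x_1$ and $x_2$ are commutators we have $\alpha(x_1)=\alpha(x_2)=1$ while $\alpha(\mu)=t$, so $\Phi_{\psi_{i,j}}$ merely multiplies the matrix $\psi_{i,j}(\gamma)$ by $t$ raised to the $\mu$-exponent sum of $\gamma$. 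The computation uses two elementary facts: the conjugation identity $J\,\mathrm{diag}(c,d)\,J^{-1}=\mathrm{diag}(d,c)$, and the reductions $\mu(1-\mu^{-1})=\mu-1$, $\mu(-1+\mu^{-1})=1-\mu$ already recorded for the Fox derivatives above.

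First I would feed the four Fox derivatives into $\Phi_{\psi_{i,j}}$. Using the observations on $\alpha$ and the swap identity, the four $2\times 2$ blocks become $(tJ-\I)\,\mathrm{diag}(p_1,p_2)$, $\mathrm{diag}(a^m,a^{-m})$, $tJ\,\mathrm{diag}(a^{2mn},a^{-2mn})$, and $(\I-tJ)\,\mathrm{diag}(q_1,q_2)$, where the diagonal entries are $p_1=\delta_{m-1}(a^{-1})$, $p_2=s$, $q_1=\delta_{n-1}(a^{2m})a^{2m}$, $q_2=\delta_{n-1}(a^{-2m})a^{-2m}$. To make the bookkeeping uniform I would reverse the geometric sums, $\delta_{m-1}(a^{-1})=a^{1-m}s$ and $\delta_{n-1}(a^{-2m})=a^{2m(1-n)}u$, so that $p_1=a^{1-m}s$, $q_1=a^{2m}u$, $q_2=a^{-2mn}u$; every diagonal entry is then a power of $a$ times $s$ or $u$. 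The denominator is $\det\Phi_{\psi_{i,j}}(1-\mu)=\det(\I-tJ)=t^2+1$.

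Next I would evaluate the resulting $4\times 4$ determinant $N$ by Laplace expansion along its first two columns. The two diagonal $2\times 2$ blocks each contribute a factor $t^2+1$, giving the term $p_1p_2q_1q_2\,(t^2+1)^2=a^{1+m-2mn}s^2u^2\,(t^2+1)^2$; the six remaining pairs of complementary $2\times 2$ minors contribute only $t^2$-terms, which combine into $t^2\bigl(1-su\,\Sigma\bigr)$, where $\Sigma=a^{1-2mn}+a^{m+2mn}+a^{1-4mn}+a^m$. Hence $N=a^{1+m-2mn}s^2u^2(t^2+1)^2+t^2\bigl(1-su\,\Sigma\bigr)$, a form valid for every $a$.

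The crux, and the step I expect to be the main obstacle, is case (a): showing that the correction $t^2(1-su\,\Sigma)$ vanishes, i.e.\ that $su\,\Sigma=1$, which is exactly the divisibility of $N$ by $(t^2+1)^2$. Here I would use that $a$ is a root of unity with $a^{4mn-1}=1$. That relation gives $a^{1-4mn}=1$, after which $\Sigma$ regroups as $(1+a^m)(1+a^{1-2mn})$; the identity $(1+b^{k+1})\delta_k(b)=\delta_{2k+1}(b)$ (equivalently $(1+a^m)(a^m-1)=a^{2m}-1$) cancels the factor $a^{2m}-1$ hidden in $u$, and the surviving factor collapses to $1$ precisely because $a^{2mn}=a^{1-2mn}$, i.e.\ again $a^{4mn-1}=1$. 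This yields $N=a^{1+m-2mn}s^2u^2(t^2+1)^2$, and dividing by $t^2+1$ gives formula (a). For case (b) I would simply set $a=1$, so that $s=m$, $u=n$, $\Sigma=4$, and $su\,\Sigma=4mn$; then $N=m^2n^2(t^2+1)^2+t^2(1-4mn)$ rearranges to $m^2n^2t^4+(2m^2n^2-4mn+1)t^2+m^2n^2$, and dividing by $t^2+1$ — which now does \emph{not} divide $N$, reflecting that $\psi_{i,j}$ is abelian when $a=1$ — gives the rational function in (b).
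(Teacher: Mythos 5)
Your proposal is correct and takes essentially the same route as the paper: compute the images of the four Fox derivatives, evaluate the $4\times 4$ determinant to get $a^{1+m-2mn}s^2u^2(t^2+1)^2+t^2(1-su\,\Sigma)$, and then show $su\,\Sigma=\delta_{2m-1}(a)\delta_{2n-1}(a^{2m})=\delta_{4mn-1}(a)$, which equals $1$ when $a\neq 1$ (using $a^{4mn-1}=1$) and $4mn$ when $a=1$, before dividing by $\det\Phi_{\psi_{i,j}}(1-\mu)=t^2+1$. The only blemish is a harmless miscount: the Laplace expansion along the first two columns has six terms in total, so there are \emph{five} (not six) terms besides the diagonal-block one — four giving the $-t^2su\,a^{\bullet}$ contributions and one giving $+t^2$ — and your stated formula for them is nonetheless exactly right.
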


\begin{proof}
  For brevity, we set $a = \xi^{i(1-2j)}$. Then $\psi_{i, j}$ is written as
  \[
  \psi_{i,j}(x_1) =
  \begin{bmatrix}
    a & 0 \\
    0 & a^{-1} \end{bmatrix}, \quad
  \psi_{i,j}(x_2) =
  \begin{bmatrix}
    a^{2m} & 0 \\
    0 & a^{-2m} \end{bmatrix}, \quad 
  \psi_{i,j}(\mu) =
  \begin{bmatrix}
    0 & 1 \\
    -1 & 0
  \end{bmatrix}.
  \]

  It follows that 
  \begin{align*}
    \Phi_{\psi_{i,j}} \left( \frac{\partial r_1}{\partial x_1} \right)
    &=  
    \delta_{m-1}(a)
    \begin{bmatrix}
      -1 & t a^{1-m}  \\
      - t  & - a^{1-m}
    \end{bmatrix},\\
    \Phi_{\psi_{i,j}} \left( \frac{\partial r_1}{\partial x_2} \right)
    &= \begin{bmatrix}
      a^{-m} & 0 \\
      0 & a^m
    \end{bmatrix},\\
    \Phi_{\psi_{i,j}} \left( \frac{\partial r_2}{\partial x_1} \right)
    &= \begin{bmatrix}
      0 & t a^{2mn} \\
      -t a^{-2mn} & 0
    \end{bmatrix},\\
    \Phi_{\psi_{i,j}} \left( \frac{\partial r_2}{\partial x_2} \right)
    &=
    \delta_{n-1}(a^{2m})
    \begin{bmatrix}
      a^{-2mn}  & - t a^{2m}  \\
      t a^{-2mn}  & a^{2m}
    \end{bmatrix}.
  \end{align*}
  Hence, by a direct calculation, we have
  \begin{align*}
    \det \left[
      \Phi_{\psi_{i,j}}\Big(\frac{\partial r_k}{\partial x_l} \Big)
      \right]_{1 \le k, l \le 2}
    &= t^2 - (a^m + a^{1-4mn} ) (a^{2mn} + 1)  \delta_{m-1}(a) \delta_{n-1}(a^{2m})  t^2  \\
    &  + \, a^{1+m-2mn} \left( \delta_{m-1}(a) \delta_{n-1}(a^{2m}) \right)^2 (t^2+1)^2.
  \end{align*}
  Since $\xi^{4mn-1}=1$, we have $a^{4mn-1} = 1$. This implies that 
  \begin{align*} 
    (a^m + a^{1-4mn} ) (a^{2mn} + 1)  \delta_{m-1}(a) \delta_{n-1}(a^{2m}) 
    &= (a^m + 1 ) \delta_{m-1}(a) (a^{2mn} + 1) \delta_{n-1}(a^{2m}) \\
    &= \delta_{2m-1}(a) \delta_{2n-1}(a^{2m}) \\
    &= \delta_{4mn-1}(a) \\
    &=
    \begin{cases}
      \frac{a^{4mn}-1}{a-1} = 1 &\mbox{if } a \not= 1 \\ 
      4mn & \mbox{if } a=1
    \end{cases}.
  \end{align*}
  Hence, we obtain
  \[
  \det \left[
    \Phi_{\psi_{i,j}} \left(\frac{\partial r_k}{\partial x_l} \right)
    \right]_{1 \le k,l \le 2}
  = \begin{cases}
    a^{1+m-2mn} \left( \delta_{m-1}(a) \delta_{n-1}(a^{2m}) \right)^2 (t^2+1)^2 &\mbox{if $a \not= 1$} \\ 
    m^2 n^2 (t^2+1)^2 - (4mn-1)t^2 & \mbox{if $a=1$}.
  \end{cases}
  \]
  The lemma follows, since $\det \Phi_{\psi_{i,j}}(1-\mu) = t^2 +1$.
\end{proof}

\begin{remark}
Note that $a=1$ corresponds to an abelian representation and 
$m^2 n^2 (t^2+1)^2 - (4mn-1)t^2 = \Delta_{J(2m, 2n)}(\sqrt{-1} \, t)\Delta_{J(2m, 2n)}(-\sqrt{-1} \, t)$.
\end{remark}

\begin{lemma}
  \label{lem:main}
  Suppose $\lambda$ is a root of unity whose order is an odd integer $q$. 
  Let $k$ be an positive integer coprime with $q$. Then
  $$\prod_{\substack{1 \le j \le q, \\ j \not= (q+1)/2}} \delta_{k-1}(\lambda^{2j-1}) =1.$$
\end{lemma}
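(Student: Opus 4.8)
The plan is to recognize the product as ranging over the nontrivial $q$-th roots of unity and then evaluate it by elementary cyclotomic factorizations. First I would observe that the excluded index $j = (q+1)/2$ is precisely the one for which $2j-1 \equiv 0 \pmod q$, so that $\lambda^{2j-1} = \lambda^q = 1$ there and $\delta_{k-1}(\lambda^{2j-1}) = k$ is undefined by the formula $(\zeta^k-1)/(\zeta-1)$. For the remaining indices I would use that $q$ is odd, so $2$ is invertible modulo $q$ and the map $j \mapsto 2j-1 \pmod q$ is a bijection of $\{1, \dots, q\}$ onto $\Z/q\Z$. Since $\lambda$ has order exactly $q$, it follows that $\lambda^{2j-1}$ runs exactly once over every $q$-th root of unity, and deleting the index $j=(q+1)/2$ leaves precisely the nontrivial $q$-th roots of unity. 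Writing $\mu_q^\ast$ for this set, the product in question becomes $\prod_{\zeta \in \mu_q^\ast} \delta_{k-1}(\zeta)$.

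Next I would use $\delta_{k-1}(\zeta) = (\zeta^k - 1)/(\zeta - 1)$, valid for $\zeta \neq 1$, to separate numerator and denominator:
\[
\prod_{\zeta \in \mu_q^\ast} \delta_{k-1}(\zeta)
= \frac{\prod_{\zeta \in \mu_q^\ast}(\zeta^k - 1)}{\prod_{\zeta \in \mu_q^\ast}(\zeta - 1)}.
\]
For the denominator I would start from $x^q - 1 = \prod_{\zeta \in \mu_q}(x - \zeta)$, divide by $x-1$ to obtain $1 + x + \cdots + x^{q-1} = \prod_{\zeta \in \mu_q^\ast}(x - \zeta)$, and evaluate at $x = 1$ to get $\prod_{\zeta \in \mu_q^\ast}(1 - \zeta) = q$. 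Because $q$ is odd, the set $\mu_q^\ast$ has even cardinality $q-1$, so $\prod_{\zeta \in \mu_q^\ast}(\zeta - 1) = (-1)^{q-1} q = q$.

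For the numerator, the hypothesis $\gcd(k,q) = 1$ ensures that $\zeta \mapsto \zeta^k$ is an automorphism of the cyclic group $\mu_q$; it fixes $1$ and hence permutes $\mu_q^\ast$. Therefore $\prod_{\zeta \in \mu_q^\ast}(\zeta^k - 1) = \prod_{\eta \in \mu_q^\ast}(\eta - 1) = q$ by the same evaluation as above. Dividing gives $q/q = 1$, which is the claim. I expect the only delicate points to be the two bookkeeping steps: confirming that the reindexing $j \mapsto 2j-1$ together with the oddness of $q$ isolates exactly the single trivial root, and that coprimality makes the $k$-th power map a genuine permutation of $\mu_q^\ast$. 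Once these are in place, the cyclotomic evaluations are routine, and the oddness of $q$ is used a second time to discard the sign $(-1)^{q-1}$.
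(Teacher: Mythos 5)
Your proof is correct and follows essentially the same route as the paper's: write each factor as $(\zeta^k-1)/(\zeta-1)$ for $\zeta$ a nontrivial $q$-th root of unity (using oddness of $q$ to see that $j \mapsto 2j-1$ isolates exactly the trivial root), and then use coprimality of $k$ with $q$ to conclude that the numerator factors are a permutation of the denominator factors. The only difference is cosmetic: you go on to evaluate both products to the common value $q$ via the cyclotomic identity $\prod_{\zeta \in \mu_q^\ast}(1-\zeta) = q$, whereas the paper stops at the observation that both equal $(\lambda-1)(\lambda^2-1)\cdots(\lambda^{q-1}-1)$, which already forces the ratio to be $1$.
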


\begin{proof}
  Consider $1 \le j \le q$.
  Note that $\lambda^{2j-1} = 1$ if and only if $j=\frac{q+1}{2}$.
  We have
  \[
  \prod_{\substack{1 \le j \le q, \\ j \not= (q+1)/2}} \delta_{k-1}(\lambda^{2j-1}) 
  = \prod_{\substack{1 \le j \le q, \\ j \not= (q+1)/2}} \frac{\lambda^{k(2j-1)} - 1}{\lambda^{2j-1} - 1}.
  \]
  Since $q = \text{order}(\lambda)$ is an odd integer, for any integer $l$ co-prime with $q$ 
  we have that the map $j \mapsto l(2j-1) \pmod{q}$ gives an bijection from the set
  $\{1 \le j \le q \mid  j \not= (q+1)/2\}$ to $\{1, 2, \cdots, q-1\}$. 
  In particular, we have 
  \[\prod_{\substack{1 \le j \le q, \\ j \not= (q+1)/2}} (\lambda^{l(2j-1)} - 1) 
  = (\lambda -1) (\lambda^2 -1) \cdots (\lambda^{q-1} -1).\]

  Since $k$ is coprime with $q$, 
  the denominator coincides with the numerator in the product of
  $\delta_{k-1}(\lambda^{2j-1}) = 
  \frac{\lambda^{k(2j-1)} - 1}{\lambda^{2j-1} - 1}$. 
  This is our claim.
\end{proof}

Let $q_i = \frac{4mn-1}{\gcd(4mn-1,i)}$. Note that 
$\xi^{i}$ is a root of unity whose order equals to $q_i$. Moreover,  
for $1 \le j \le q_i$ we have $\xi^{i(2j-1)} = 1$ if and only if $j=\frac{q_i+1}{2}$. 

Since the orders of $\xi^i$ and $\xi^{2mi}$ are equal to $q_i$ which is coprime 
with both $m$ and $n$, Lemma~\ref{lem:main} implies that 
\[
\prod_{\substack{1 \le j \le q_i, \\ j \not= (q_i+1)/2}} \delta_{m-1}(\xi^{i(2j-1)}) =1 
\qquad \text{and} \qquad
\prod_{\substack{1 \le j \le q_i, \\ j \not= (q_i+1)/2}} \delta_{n-1}(\xi^{2m i(2j-1)}) =1.
\]

Note that
\[
\prod_{\substack{1 \le j \le q_i, \\ j \not= (q_i+1)/2}} \xi^{i(2j-1)}
= \prod_{1 \le j \le q_i} \xi^{i(2j-1)} = 1.
\]

We obtain the following theorem from the above computations.
\begin{theorem}
  \label{thm:TAP_positive}
  Set $q_i = \frac{4mn-1}{\gcd(4mn-1,i)}$.
  The limit of
  $(\log \TAP{J(2m ,2n)}{(\rho_i)_{2N}}(t) )/(2N)$ is expressed as
  \[
  \lim_{N \to \infty} \frac{\log \TAP{J(2m, 2n)}{(\rho_i)_{2N}}(t)}{2N}
  = \frac{1}{2q_i} \log \frac{m^2n^2t^4 + (2m^2n^2 - 4mn +1)t^2 + m^2n^2}{(t^2+1)^2}
  + \frac{1}{2} \log (t^2 + 1).
  \]
\end{theorem}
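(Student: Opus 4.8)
The plan is to feed the explicit data assembled above into the general asymptotic formula of Theorem~\ref{thm:main_I}. First I would verify that the period $p$ appearing there specializes to $q_i$ for the representation $\rho_i$. The relevant roots of unity are $z_1 = \xi^{i}$ and $z_2 = \xi^{2mi}$, so $p$ is the l.c.m. of their orders. The order of $\xi^{i}$ is exactly $q_i = (4mn-1)/\gcd(4mn-1,i)$ by definition of $\xi$. Since $4mn-1 \equiv -1 \pmod{m}$ and $4mn-1$ is odd, $4mn-1$ is coprime to $2m$, whence $\gcd(4mn-1,2mi)=\gcd(4mn-1,i)$ and $\xi^{2mi}$ also has order $q_i$. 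Therefore $p=q_i$, and Theorem~\ref{thm:main_I} gives
\[
\lim_{N \to \infty} \frac{\log \TAP{J(2m, 2n)}{(\rho_i)_{2N}}(t)}{2N}
= \frac{1}{2q_i} \sum_{j=1}^{q_i} \log \TAP{J(2m, 2n)}{\psi_{i,j}}(t)
= \frac{1}{2q_i} \log \prod_{j=1}^{q_i} \TAP{J(2m, 2n)}{\psi_{i,j}}(t).
\]
Rewriting the sum of logarithms as the logarithm of a single product is convenient, as it sidesteps any choice of branch for the root-of-unity factors.

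Next I would evaluate $\prod_{j=1}^{q_i} \TAP{J(2m, 2n)}{\psi_{i,j}}(t)$ by splitting off the single index $j=(q_i+1)/2$ (an integer since $q_i$ is odd), at which $\xi^{i(2j-1)}=1$. For this index Proposition~\ref{prop:main}(b) contributes the factor $(m^2n^2t^4+(2m^2n^2-4mn+1)t^2+m^2n^2)/(t^2+1)$. For every other index Proposition~\ref{prop:main}(a) applies, contributing
\[
(\xi^{i(2j-1)})^{1+m-2mn}\bigl(\delta_{m-1}(\xi^{i(2j-1)})\,\delta_{n-1}(\xi^{2mi(2j-1)})\bigr)^2 (t^2+1).
\]
Gathering these over $j \neq (q_i+1)/2$, the product of the $\xi^{i(2j-1)}$ equals $1$, and, because $q_i$ is odd and coprime to both $m$ and $n$, Lemma~\ref{lem:main} forces both $\prod \delta_{m-1}(\xi^{i(2j-1)})$ and $\prod \delta_{n-1}(\xi^{2mi(2j-1)})$ to equal $1$. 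Hence every root-of-unity factor cancels and only powers of $t^2+1$ survive, leaving $(t^2+1)^{q_i-1}$.

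Combining the two contributions yields
\[
\prod_{j=1}^{q_i} \TAP{J(2m, 2n)}{\psi_{i,j}}(t)
= \frac{m^2n^2t^4+(2m^2n^2-4mn+1)t^2+m^2n^2}{t^2+1}\,(t^2+1)^{q_i-1},
\]
and taking $\tfrac{1}{2q_i}\log$ of this while regrouping the powers of $t^2+1$ produces the stated right-hand side. The argument is essentially an assembly of the preceding results, so I do not anticipate a genuine obstacle; the only points demanding care are confirming that the general period $p$ specializes to $q_i$ (which rests on the coprimality of $4mn-1$ with $2m$) and checking that the hypotheses of Lemma~\ref{lem:main}, namely that $q_i$ is odd and coprime with each of $m$ and $n$, are met so that the root-of-unity factors cancel cleanly.
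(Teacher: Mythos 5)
Your proposal is correct and follows essentially the same route as the paper: identify the period $p$ of Theorem~\ref{thm:main_I} with $q_i$, split off the index $j=(q_i+1)/2$ where Proposition~\ref{prop:main}(b) applies, and cancel all remaining root-of-unity factors via Lemma~\ref{lem:main}, leaving $(t^2+1)^{q_i-1}$. The only differences are cosmetic (you work with the product and take one logarithm at the end, and you spell out the coprimality check $\gcd(4mn-1,2mi)=\gcd(4mn-1,i)$ that the paper leaves implicit).
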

\begin{proof}
Since $\Delta_{{K}, \, (\rho_i)_{2N}}(t) = \prod_{j=1}^N \Delta_{{K}, \, \psi_{i,j}}(t)$, 
Proposition \ref{prop:main} implies
\begin{align*}
  &\lim_{N \to \infty} \frac{\log \TAP{J(2m, 2n)}{(\rho_i)_{2N}}(t)}{2N}\\
  &= \frac{1}{2q_i} \sum_{j=1}^{q_i} \log \TAP{J(2m ,2n)}{\psi_{i,j}}(t) \\
  &= \frac{1}{2q_i}
  \left( \log \frac{m^2n^2t^4 + (2m^2n^2 - 4mn +1)t^2 + m^2n^2}{t^2+1} + (q_i - 1) \log (t^2+1) \right)\\
  &= \frac{1}{2q_i} \log \frac{m^2n^2t^4 + (2m^2n^2 - 4mn +1)t^2 + m^2n^2}{(t^2+1)^2} + \frac{1}{2} \log (t^2 + 1).
\end{align*}
\end{proof}

\begin{corollary}
  \label{cor:Rtorsion_positive}
  The leading coefficient of $\,\log |\BT_{J(2m, 2n), \, (\rho_i)_{2N}}|$ converges as follows:
  \begin{align*}
    \lim_{N \to \infty} \frac{\log \BT_{J(2m, 2n), \, (\rho_i)_{2N}}}{2N} 
    &= \frac{1}{q_i} \log \frac{2mn-1}{2} + \frac{1}{2} \log 2 \\
    &= \frac{\gcd(4mn-1,i)}{4mn-1} \log \frac{2mn-1}{2} + \frac{1}{2} \log 2.
  \end{align*}
  In particular,
  the growth order of $\,\log |\BT_{J(2m, 2n), \, (\rho_i)_{2N}}|$ is the same as $2N$.
\end{corollary}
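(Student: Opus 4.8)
The plan is to obtain Corollary~\ref{cor:Rtorsion_positive} as a specialization of Theorem~\ref{thm:TAP_positive} at $t=1$, using the identity $\BT_{J(2m,2n),\,(\rho_i)_{2N}} = \TAP{J(2m, 2n)}{(\rho_i)_{2N}}(1)$ (up to sign) recorded in the remark after Proposition~\ref{prop:Rtorsion_TAP}. Before substituting, I would first confirm that the hypothesis of Corollary~\ref{cor:main_I} holds for $K = J(2m,2n)$, i.e.\ that $\TAP{K}{\psi_{i,j}}(1) \neq 0$ for every $j$ in one period; this guarantees that $\BT_{J(2m,2n),\,(\rho_i)_{2N}}$ is defined for all $N$ and that the averaging of Theorem~\ref{thm:main_I} remains valid at $t=1$. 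The nonvanishing is read off Proposition~\ref{prop:main}: in case (b) the value at $t=1$ is $(2mn-1)^2/2 > 0$, while in case (a) it equals $2\,(\xi^{i(2j-1)})^{1+m-2mn}\big(\delta_{m-1}(\xi^{i(2j-1)})\,\delta_{n-1}(\xi^{2mi(2j-1)})\big)^2$, whose $\delta$-factors are nonzero because the order $q_i$ of $\xi^{i(2j-1)}$ divides $4mn-1$ and is therefore coprime to both $m$ and $n$, so $\delta_{m-1}(b)=\tfrac{b^m-1}{b-1}\neq 0$ and $\delta_{n-1}(b^{2m})\neq 0$ for $b = \xi^{i(2j-1)} \neq 1$ (note $b^{2m}\neq 1$ since the order of $b$ is odd and coprime to $m$).

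With this in hand, the core of the argument is the arithmetic of setting $t=1$ in the right-hand side of Theorem~\ref{thm:TAP_positive}. The numerator $m^2n^2 t^4 + (2m^2n^2 - 4mn + 1)t^2 + m^2n^2$ becomes $4m^2n^2 - 4mn + 1 = (2mn-1)^2$, while $t^2+1 = 2$ and $(t^2+1)^2 = 4$. Hence
\[
\lim_{N\to\infty}\frac{\log|\BT_{J(2m,2n),\,(\rho_i)_{2N}}|}{2N}
= \frac{1}{2q_i}\log\frac{(2mn-1)^2}{4} + \frac{1}{2}\log 2
= \frac{1}{q_i}\log\frac{2mn-1}{2} + \frac{1}{2}\log 2,
\]
which is the first claimed equality. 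The second follows by inserting $q_i = (4mn-1)/\gcd(4mn-1,i)$, so that $1/q_i = \gcd(4mn-1,i)/(4mn-1)$.

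It remains to justify the growth-order statement, namely that the limit $c$ above is nonzero. Multiplying through by $q_i$ gives $q_i c = \log(2mn-1) + \tfrac{q_i-2}{2}\log 2$, so $c=0$ would force $2mn-1 = 2^{(2-q_i)/2}$. Since $q_i$ is an odd divisor of $4mn-1$, the exponent $(2-q_i)/2$ is a half-integer and the right-hand side is irrational, whereas $2mn-1$ is a positive integer; this contradiction shows $c\neq 0$, and therefore $\log|\BT_{J(2m,2n),\,(\rho_i)_{2N}}|$ grows at order exactly $2N$.

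I do not expect a serious obstacle: the statement is a corollary whose entire content, beyond the bookkeeping of Theorem~\ref{thm:TAP_positive} and Corollary~\ref{cor:main_I}, is the two lines of simplification at $t=1$. The only point demanding genuine care is the legitimacy of interchanging the limit $N\to\infty$ with evaluation at $t=1$; this is precisely what the nonvanishing check in the first paragraph secures, since it places $t=1$ in the region where each of the finitely many distinct factors $\TAP{K}{\psi_{i,j}}(t)$ is regular and nonzero, so that~\cite[Lemma~3.11]{Yamaguchi:asymptoticsRtorsion} applies verbatim.
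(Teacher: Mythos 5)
Your proposal is correct and follows essentially the same route as the paper: verify that $\TAP{J(2m,2n)}{\psi_{i,j}}(1) \neq 0$ for all $j$ in one period (so that Corollary~\ref{cor:main_I} applies and $\BT_{J(2m,2n),\,(\rho_i)_{2N}} = \TAP{J(2m,2n)}{(\rho_i)_{2N}}(1)$), then specialize the formula of Theorem~\ref{thm:TAP_positive} at $t=1$, where the numerator becomes $(2mn-1)^2$. Your closing irrationality argument showing the limit is nonzero is a welcome addition: the paper's proof simply cites Corollary~\ref{cor:main_I} for the growth-order claim without explicitly verifying that the limiting constant does not vanish.
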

\begin{proof}
  Since $\TAP{J(2m, 2n)}{\psi_{i, j}}(1) \not = 0$ for $1 \leq \forall j \leq q_i$, 
  we can apply Corollary~\ref{cor:main_I} to this situation.
\end{proof}

\subsection{The case of \boldmath$K = J(2m,-2n)$} 
Since $(|\Delta_K(-1)|-1)/2 = 2mn$,
there are $2mn$ irreducible metabelian representations $\rho_i \co \pi_1(E_K) \to \SL$, where $1 \le i \le 2mn$.

A Lin presentation of $K = J(2m,2n)$ is
\[
\pi_1(E_{K}) =
\la x_1, x_2, \mu \mid
\mu x_1^m \mu^{-1} = x_1^m x_2^{-1}, \, \mu x_2^{n} x_1 \mu^{-1} =  x_2^{n} \ra.\]
Up to conjugation, we may assume that 
\[
\rho_i(x_1) = 
\begin{bmatrix}
  \xi^i & 0 \\
  0 & \xi^{-i} 
\end{bmatrix}, \quad
\rho_i(x_2) =
\begin{bmatrix}
  \xi^{2mi} & 0 \\
  0 & \xi^{-2mi}
\end{bmatrix}, \quad
\rho_i(\mu) = 
\begin{bmatrix}
  0 & 1 \\
  -1 & 0 \end{bmatrix},
\]
where $\xi=e^{2\pi \sqrt{-1}/(4mn+1)}$ and $1 \le i \le 2mn$. Then
By Proposition~\ref{prop:rep_decompo} shows that  
\begin{gather*}
  (\rho_i)_{2N} \underset{\mathrm{conj.}}{\sim} \bigoplus_{j=1}^{N} \psi_{i,j}, \\
  \psi_{i,j}(x_1) = 
  \begin{bmatrix}
    \xi^{i(1-2j)} & 0 \\
    0 & \xi^{i(2j-1)} 
  \end{bmatrix}, \quad 
  \psi_{i,j}(x_2) = 
  \begin{bmatrix}
    \xi^{2mi(1-2j)} & 0 \\
    0 & \xi^{2mi(2j-1)} 
  \end{bmatrix}, \quad
  \psi_{i,j}(\mu) = 
  \begin{bmatrix}
    0 & 1 \\
    -1 & 0
  \end{bmatrix}.
\end{gather*}

By a similar argument to the case of $J(2m ,2n)$,
we obtain the asymptotic behavior of the twisted Alexander polynomial 
for $J(2m, -2n)$.
\begin{theorem}
  \label{thm:TAP_negative}
  Let $q_i = \frac{4mn+1}{\gcd(4mn+1,i)}$. 
  For $1 \le i \le 2mn$, it holds that
  \[
  \lim_{N \to \infty} \frac{\log \Delta_{{K}, \, (\rho_i)_{2N}}(t)}{2N} 
  = \frac{1}{2q_i} \log \frac{m^2n^2t^4 + (2m^2n^2 +4mn +1)t^2 + m^2n^2}{(t^2+1)^2}
  + \frac{1}{2} \log (t^2 + 1).
  \]
\end{theorem}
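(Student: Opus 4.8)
The plan is to run the argument of the $J(2m,2n)$ subsection essentially verbatim, tracking the single structural change that the second relator is now $r_2 = \mu x_2^{n} x_1 \mu^{-1} x_2^{-n}$ instead of $\mu x_2^{-n} x_1 \mu^{-1} x_2^{n}$, and that $\xi = e^{2\pi\sqrt{-1}/(4mn+1)}$, so that $a := \xi^{i(1-2j)}$ satisfies $a^{4mn+1}=1$. First I would record the Fox derivatives. The relator $r_1$ is unchanged, so $\partial r_1/\partial x_1 = \mu(1-\mu^{-1})\delta_{m-1}(x_1)$ and $\partial r_1/\partial x_2 = \mu x_1^m\mu^{-1}$ as before. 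For $r_2$ one finds $\partial r_2/\partial x_1 = \mu x_2^{n}$ and, after using the group relation $\mu x_2^{n} x_1\mu^{-1} = x_2^{n}$ to replace $x_2^{n} x_1\mu^{-1}$ by $\mu^{-1}x_2^{n}$, the simplification $\partial r_2/\partial x_2 = \mu(1-\mu^{-1})\delta_{n-1}(x_2)$. Applying $\Phi_{\psi_{i,j}}$ and using $\delta_{n-1}(a^{-2m}) = a^{2m(1-n)}\delta_{n-1}(a^{2m})$, the four blocks coincide with the positive-case blocks of Proposition~\ref{prop:main} except that the exponents $\pm 2mn$ in $\Phi_{\psi_{i,j}}(\partial r_2/\partial x_1)$ are interchanged and $\Phi_{\psi_{i,j}}(\partial r_2/\partial x_2) = \delta_{n-1}(a^{2m})\left[\begin{smallmatrix}-1 & ta^{2m(1-n)}\\ -t & -a^{2m(1-n)}\end{smallmatrix}\right]$.

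Next I would expand the $4\times 4$ determinant, most cleanly via the Schur complement after swapping block columns, using that $\Phi_{\psi_{i,j}}(\partial r_1/\partial x_2)$ is diagonal and invertible. The output has the same shape as in Proposition~\ref{prop:main}, with $t^0$-, $t^2$- and $(t^2+1)^2$-pieces, and the entire simplification rests on the identity $(a^m+1)\delta_{m-1}(a)(a^{2mn}+1)\delta_{n-1}(a^{2m}) = \delta_{2m-1}(a)\delta_{2n-1}(a^{2m}) = \delta_{4mn-1}(a)$, which follows from $(1+b^{k+1})\delta_k(b) = \delta_{2k+1}(b)$. The one genuine difference from the positive case lies in evaluating $\delta_{4mn-1}(a)$: here $a^{4mn+1}=1$ forces $a^{4mn}=a^{-1}$, so for $a\neq1$ one gets $\delta_{4mn-1}(a) = (a^{4mn}-1)/(a-1) = -a^{-1}$, and for $a=1$ one gets $4mn$. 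Because the relation $a^{-4mn}=a$ introduces an extra factor of $a$ in the cross term, that term contributes $a\,\delta_{4mn-1}(a)$, equal to $-1$ when $a\neq1$ and to $+4mn$ when $a=1$; this is precisely the sign flip converting the $-4mn$ of the positive case into the $+4mn$ here. Dividing by $\det\Phi_{\psi_{i,j}}(1-\mu) = t^2+1$, I thereby obtain the analog of Proposition~\ref{prop:main}: for $a\neq1$, $\TAP{K}{\psi_{i,j}}(t) = a^{1+m-2mn}\left(\delta_{m-1}(a)\delta_{n-1}(a^{2m})\right)^2(t^2+1)$, and for $a=1$, $\TAP{K}{\psi_{i,j}}(t) = \big(m^2n^2t^4 + (2m^2n^2+4mn+1)t^2 + m^2n^2\big)/(t^2+1)$.

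Finally, since $\TAP{K}{(\rho_i)_{2N}}(t) = \prod_{j=1}^N \TAP{K}{\psi_{i,j}}(t)$ by Proposition~\ref{prop:rep_decompo}, and since $\xi^i$ and $\xi^{2mi}$ both have order $q_i = (4mn+1)/\gcd(4mn+1,i)$, which is coprime with $m$ and $n$, I would invoke Lemma~\ref{lem:main} to see that the product of the $a\neq1$ factors over one period (the indices $1\le j\le q_i$ with $j\neq(q_i+1)/2$) telescopes: the $\delta_{m-1}$ and $\delta_{n-1}$ products equal $1$ and the power-of-$\xi$ prefactor equals $1$, leaving only the single $a=1$ factor at $j=(q_i+1)/2$ together with $q_i-1$ copies of $(t^2+1)$. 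Averaging over a period as in Theorem~\ref{thm:main_I} then gives the stated limit, exactly as in the proof of Theorem~\ref{thm:TAP_positive}. The main obstacle is the determinant computation of the second step together with the careful bookkeeping of the exponents of $a$ modulo $4mn+1$; pinning down the sign of the cross term, so that $+4mn$ rather than $-4mn$ appears, is the one place where the negative-twist case genuinely departs from the positive one.
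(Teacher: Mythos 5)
Your proposal is correct and follows essentially the same route as the paper: the paper's own proof of Theorem~\ref{thm:TAP_negative} simply says ``by a similar argument to the case of $J(2m,2n)$'' and averages over the period $q_i$, and your write-up supplies exactly the details it omits --- the Fox calculus for the new relator $r_2=\mu x_2^n x_1\mu^{-1}x_2^{-n}$, the negative-case analog of Proposition~\ref{prop:main} (including the correct sign flip $-4mn\mapsto +4mn$ coming from $a^{4mn+1}=1$, so that $a\,\delta_{4mn-1}(a)=-1$ for $a\neq 1$ and $4mn$ for $a=1$), and the application of Lemma~\ref{lem:main} to collapse the product over one period. All the computations you outline check out against the formulas in the statement.
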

\begin{proof}
  The sequence of $\Delta_{{K}, \, (\rho_i)_{2N}}(t)$ has the period $q_i$.
  The limit of $\frac{\log \Delta_{{K}, \, (\rho_i)_{2N}}(t)}{2N}$ turns into 
  \begin{align*}
    & \frac{1}{2q_i} \sum_{j=1}^{q_i} \log \Delta_{{K}, \, \psi_{i,j}}(t) \\
    &= \frac{1}{2q_i}
    \left( \log \frac{m^2n^2t^4 + (2m^2n^2 + 4mn +1)t^2 + m^2n^2}{t^2+1}
    + (q_i - 1) \log (t^2+1) \right)\\
    &= \frac{1}{2q_i} \log \frac{m^2n^2t^4 + (2m^2n^2 +4mn +1)t^2 + m^2n^2}{(t^2+1)^2}
    + \frac{1}{2} \log (t^2 + 1).
  \end{align*}
\end{proof}

We also have the similar corollary to the case of $J(2m, 2n)$.
\begin{corollary}
  \label{cor:Rtorsion_negative}
  The leading coefficient of $\,\log |\BT_{{K}, \, (\rho_i)_{2N}}|$ converges as follows:
  \begin{align*}
    \lim_{N \to \infty} \frac{\log \BT_{{K}, \, (\rho_i)_{2N}}}{2N}
    &= \frac{1}{q_i} \log \frac{2mn+1}{2} + \frac{1}{2} \log 2 \\
    &= \frac{\gcd(4mn+1,i)}{4mn+1} \log \frac{2mn+1}{2} + \frac{1}{2} \log 2.
  \end{align*}
  In particular,
  the growth order of $\,\log |\BT_{{K}, \, (\rho_i)_{2N}}|$ is the same as $2N$.
\end{corollary}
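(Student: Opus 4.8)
The plan is to mirror the argument used for $J(2m,2n)$ and to reduce the whole statement to Corollary~\ref{cor:main_I}. That corollary becomes applicable as soon as we know $\TAP{K}{\psi_{i,j}}(1) \neq 0$ for every $j$, and it then identifies $\lim_{N\to\infty}\frac{\log|\BT_{K,\,(\rho_i)_{2N}}|}{2N}$ with the $t=1$ specialization of the limit already computed in Theorem~\ref{thm:TAP_negative}. So the corollary follows from exactly two ingredients: first, the non-vanishing of $\TAP{K}{\psi_{i,j}}(1)$ for all $j$; and second, a direct evaluation of the formula in Theorem~\ref{thm:TAP_negative} at $t=1$.

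For the non-vanishing I would first record the analog of Proposition~\ref{prop:main} for the sign $-2n$, which is exactly the Fox-calculus computation underlying the ``similar argument'' in the proof of Theorem~\ref{thm:TAP_negative}: writing $a = \xi^{i(1-2j)}$, one gets a case split in which $\TAP{K}{\psi_{i,j}}(t)$ equals a unit times $\bigl(\delta_{m-1}(a)\,\delta_{n-1}(a^{2m})\bigr)^2(t^2+1)$ when $\xi^{i(2j-1)}\neq 1$, and equals $\frac{m^2n^2t^4+(2m^2n^2+4mn+1)t^2+m^2n^2}{t^2+1}$ when $\xi^{i(2j-1)}=1$. In the generic case $a\neq 1$ I would argue from the arithmetic of $q_i$: since $4mn+1$ is odd and coprime with both $m$ and $n$, its divisor $q_i=\mathrm{order}(\xi^i)$ is coprime with $2m$ and with $n$. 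Hence the order of $a$ (a power of $\xi^i$) divides $q_i$ and shares no factor with $m$, $2m$, or $n$, so $a\neq 1$ forces $a^m\neq 1$, $a^{2m}\neq 1$, and $a^{2mn}\neq 1$; therefore $\delta_{m-1}(a)=\frac{a^m-1}{a-1}$ and $\delta_{n-1}(a^{2m})=\frac{a^{2mn}-1}{a^{2m}-1}$ are nonzero, and $(t^2+1)|_{t=1}=2\neq 0$. In the exceptional case $a=1$ the value at $t=1$ is $\frac{(2mn+1)^2}{2}\neq 0$. Thus $\TAP{K}{\psi_{i,j}}(1)\neq 0$ for every $j$, and Corollary~\ref{cor:main_I} applies, giving in particular that the growth order is $2N$.

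Finally I would substitute $t=1$ into Theorem~\ref{thm:TAP_negative}. The numerator of the first logarithm becomes $m^2n^2+(2m^2n^2+4mn+1)+m^2n^2=4m^2n^2+4mn+1=(2mn+1)^2$ while $(t^2+1)^2|_{t=1}=4$, so the first term collapses to $\frac{1}{2q_i}\log\frac{(2mn+1)^2}{4}=\frac{1}{q_i}\log\frac{2mn+1}{2}$, and the second term is $\frac{1}{2}\log 2$; this gives the first displayed line. Rewriting $\frac{1}{q_i}=\frac{\gcd(4mn+1,i)}{4mn+1}$ yields the second line. The only step carrying any real content is the non-vanishing in the generic case, and its entire difficulty is the coprimality bookkeeping for $q_i$ relative to $m$, $n$, and $2$; the remaining steps are routine substitution.
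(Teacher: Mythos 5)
Your proposal is correct and follows essentially the same route as the paper: the paper proves this corollary (implicitly, ``similarly'' to Corollary~\ref{cor:Rtorsion_positive}) by checking $\TAP{K}{\psi_{i,j}}(1)\neq 0$ for all $j$, invoking Corollary~\ref{cor:main_I}, and evaluating the limit of Theorem~\ref{thm:TAP_negative} at $t=1$, where the numerator collapses to $(2mn+1)^2$. Your coprimality argument for the generic non-vanishing (using that $q_i$ divides $4mn+1$, hence is coprime to $2$, $m$, and $n$) is exactly the arithmetic the paper relies on via Lemma~\ref{lem:main}, just spelled out explicitly.
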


\begin{remark}
  Since the twist knot $K_n$ in \cite{tranYam:RtosionTroidal} is the mirror image of $J(2, -2n)$, we have
  \[
  \lim_{N \to \infty} \frac{\log \BT_{{K_n}, \, (\rho_i)_{2N}}}{2N} 
  = \frac{\gcd(4n+1,i)}{4n+1} \log \frac{2n+1}{2} + \frac{1}{2} \log 2.
  \]
  Together with \cite[Theorem~3.7]{Yamaguchi:asymptoticsRtorsion} and $\mathrm{order}(\rho_i(\mu)) = 4$,
  one can also obtain the asymptotic behavior of the higher dimensional Reidemeister torsion 
  for the graph manifold obtained by $4$-surgery along $K_n$ as in \cite{tranYam:RtosionTroidal}.
  Compared with~\cite[Theorem~4.4]{tranYam:RtosionTroidal}, it can be seen that 
the coefficient $\gcd(4n+1,i)/(4n+1)$ is determined by the torus knot exterior in the graph manifold obtained by 
$4$-surgery along $K_n$.
\end{remark}

\section*{Acknowledgments}
The first author was partially supported by a grant from the Simons Foundation (\#354595 to AT).
The second author was supported by JSPS KAKENHI Grant Number $26800030$. 

%%%%%%%%%%%%%%%%%%%%%%%%%%%%%%%%%%%%%%%%%%%%%%%%%%%%%%%%%%%%%%%%%%%% 
% reference
%%%%%%%%%%%%%%%%%%%%%%%%%%%%%%%%%%%%%%%%%%%%%%%%%%%%%%%%%%%%%%%%%%%% 
%% %% For bibtex
%% \bibliographystyle{amsalpha}
%% \bibliography{TAPmetabelianReps}

\newcommand{\noop}[1]{}
\providecommand{\bysame}{\leavevmode\hbox to3em{\hrulefill}\thinspace}
\providecommand{\MR}{\relax\ifhmode\unskip\space\fi MR }
% \MRhref is called by the amsart/book/proc definition of \MR.
\providecommand{\MRhref}[2]{%
  \href{http://www.ams.org/mathscinet-getitem?mr=#1}{#2}
}
\providecommand{\href}[2]{#2}

\end{document}